\newcommand{\executeiffilenewer}[3]{%
\ifnum\pdfstrcmp{\pdffilemoddate{#1}}%
{\pdffilemoddate{#2}}>0%
{\immediate\write18{#3}}\fi%
}
\newcommand{%
\executeiffilenewer{.svg}{.pdf}%
{inkscape -z -D --file=.svg %
--export-pdf=.pdf --export-latex}%
\input{.pdf_tex}%
}[1]{%
\executeiffilenewer{#1.svg}{#1.pdf}%
{inkscape -z -D --file=#1.svg %
--export-pdf=#1.pdf --export-latex}%
\input{#1.pdf_tex}%
}
\newtheorem{theo}{Theorem}[section]
\newtheorem{lema}[theo]{Lemma}
\newtheorem{propo}[theo]{Proposition}
\DeclareMathOperator{\diag}{diag}
\DeclareMathOperator{\dgr}{dgr}
\DeclareMathOperator{\tr}{tr}
\DeclareMathOperator{\spec}{sp}
\def\j{\mbox{\boldmath $j$}}
\def\p{\mbox{\boldmath $p$}}
\def\u{\mbox{\boldmath $u$}}
\def\v{\mbox{\boldmath $v$}}
\def\x{\mbox{\boldmath $x$}}
\def\vecu{\mbox{\boldmath $u$}}
\def\vecv{\mbox{\boldmath $v$}}
\def\vec0{\mbox{\boldmath $0$}}
\def\A{\mbox{\boldmath $A$}}
\def\D{\mbox{\boldmath $D$}}
\def\I{\mbox{\boldmath $I$}}
\def\J{\mbox{\boldmath $J$}}
\def\P{\mbox{\boldmath $P$}}
\def\R{\mbox{\boldmath $R$}}
\def\U{\mbox{\boldmath $U$}}
\def\OO{\mbox{\boldmath $\Omega$}}
\def\G{\Gamma}
\def\Re{\mathbb R}
\begin{document}
\title{Equivalent characterizations of the spectra of graphs and applications
to measures of distance-regularity}

\author{V. Diego, J. F\`abrega, and M.A. Fiol
\\ \\
{\small Universitat Polit\`ecnica de Catalunya} \\
{\small Dept. de Matem\`atica Aplicada IV, Barcelona, Catalonia}\\
{\small {\tt \{jfabrega,fiol,victor.diego\}@upc.edu}} \\
 }

\maketitle

\begin{abstract}
As it is well known, the spectrum $\spec \G$ (of the adjacency matrix $\A$) of a graph $\G$, with $d$ distinct eigenvalues other than its spectral radius $\lambda_0$, usually provides a lot of information about the structure of $\G$.
Moreover, from $\spec \G$ we can define the so-called predistance polynomials
$p_0,\ldots,p_d\in \Re_d[x]$, with $\dgr p_i=i$, $i=0,\ldots,d$,  which are orthogonal with respect to the scalar product $\langle f, g\rangle_{\G} =\frac{1}{n}\tr (f(\A)g(\A))$ and normalized in such a way that
$\|p_i\|_{\G}^2=p_i(\lambda_0)$. They can be seen as a generalization for any graph of the distance polynomials of a distance-regular graph. Going further,
 we consider the preintersection numbers $\xi_{ij}^h$ for  $i,j,h\in\{0,\ldots,d\}$, which generalize the intersection numbers of a distance-regular graph, and they are the Fourier
coefficients of $p_ip_j$ in terms of the basis $\{p_h\}_{0\le h\le d}$. The aim of this paper is to show that, for any graph $\G$, the information contained in its spectrum, predistance polynomials, and preintersection numbers is equivalent. Also, we give some characterizations of distance-regularity which are based on the above concepts. For instance, we comment upon the so-called spectral excess theorem stating that a connected regular graph $G$ is distance-regular if and only if its spectral excess, which is the value of $p_d$ at $\lambda_0$, equals the average excess, that is, the mean of the numbers of vertices at extremal distance $d$ from every vertex.
\end{abstract}


\noindent{\em Mathematics Subject Classifications:} 05E30, 05C50.

\noindent{\em Keywords:} Graph; Spectrum;  Predistance polynomials; Preintersection numbers; Distance-regular graph.


\section{Preliminaries}

As it is well known, two main concepts involved in the study of a distance-regular graph $\G$ are the intersection parameters and the distance polynomials. The former gives information about the combinatorial structure of $\G$, whereas the latter constitute an orthogonal sequence and yields the distance matrices of $\G$.
Moreover, both informations are univocally determined by the spectrum of $\G$ (that is, by the adjacency eigenvalues and their multiplicities).
Both concepts were generalized for any graph (see Fiol and Garriga \cite{fg97}) and, hence, they were called
preintersection numbers and predistance polynomials. In this more general framework, it happens that some basic
properties of the intersection numbers and the distance polynomials still hold. For instance, the preintersection numbers are somewhat related with the combinatorial properties of the graph, and the predistance polynomials are also an orthogonal sequence having similar properties as the ones that inspired them.

The main concern of this paper is to show that, for any graph $G$, the information contained in its spectrum, preintersection polynomials, and preintersection numbers is equivalent.
To do our task, we use both algebraic and combinatorial techniques.
As an application of our results, we provide news
characterizations of distance-regularity which are based on the above concepts.
For instance
the so-called {\em spectral excess theorem}, stating that
a connected regular graph $G$ is distance-regular if and only if its spectral excess equals the average excess, is revisited.

We first recall some basic concepts, notation, and results on which
our study is based. For more background on spectra of
graphs, distance-regular graphs, and their characterizations,
see \cite{b93,bcn89,bh12,cds82,vdkt12,f02,g93}.
Throughout the paper, $\G=(V,E)$ stands for a (simple and finite) connected graph with vertex set $V$ and edge set $E$. We denote by $n$ the number of vertices and by $e$ the number of edges. Adjacency between vertices $u$ and $v$ ($uv\in E$) will be denoted by $u\sim v$.
The \emph{adjacency matrix} $\A$ of $\G$ is the $01$-matrix with rows and columns indexed by the vertices, such that $(\A)_{uv}=1$ if and only if $u\sim v$.

\section{Three equivalent pieces of information}
\label{3pieces}

\subsection{The spectrum}
One of the most important tools in the study of the algebraic properties of a graph $\Gamma$ is its spectrum. The {\em spectrum} of $\G$ is the set of eigenvalues of its adjacency matrix $\A$ togheter with their multiplicities:
\begin{equation}
\label{spec}
\spec \G  = \{\lambda_0^{m_0},\lambda_1^{m_1},\ldots,\lambda_d^{m_d}\},
\end{equation}
where  $\lambda_0>\lambda_1>\cdots >\lambda_d$ and, for $i=0,\ldots,d$, the superscript $m_i$
stand for the  multiplicity of the eigenvalue  $\lambda_i$. Notice that, since $\G$ is connected, $m_0=1$, and if $\G$ is $k$-regular, then $\lambda_0=k$. Throughout the paper, $d$ will always denote the number of distinct eigenvalues minus one.

\subsection{The predistance polynomials}
\label{sec-predist}

Given a graph $\Gamma$ with spectrum as above, the {\em predistance polynomials}
$p_0,\ldots,p_d$, introduced by Fiol and Garriga in \cite{fg97},
are   polynomials in $\Re_d[x]$, with $\dgr p_i=i$,  which are orthogonal with respect to the scalar product
\begin{equation}
\label{produc}
\langle f, g\rangle_{\G} =\frac{1}{n}\tr (f(\A)g(\A))=\frac{1}{n} \sum_{i=0}^d m_i
f(\lambda_i) g(\lambda_i),
\end{equation}
and normalized in such a way that
$\|p_i\|_{\G}^2=p_i(\lambda_0)$ (this always makes sense since it is known that $p_i(\lambda_0)>0$ for every $i=0,\ldots,d$\,).
Some basic properties of these polynomials, which can be seen as a generalization of the distance polynomials of a distance-regular graph, are given in the following lemma, see \cite{cffg09}.
\begin{lema}
\label{ortho-pol}
Let $\G$ be a graph with average degree $\overline{k}=2e/n$, predistance polynomials $p_i$, and consider their sums $q_i=p_0+\cdots+p_i$, for $i=0,\ldots,d$. Then,
\begin{itemize}
\item[$(a)$]
$p_0=1$, $p_1=(\lambda_0/\overline{k})x$, and the constants of the three-term recurrence
\begin{equation}
\label{3-term-recur}
xp_i=\beta_{i-1}p_{i-1}+\alpha_i p_i+\gamma_{i+1}p_{i+1},
\end{equation}
where $\beta_{-1}=\gamma_{d+1}=0$, satisfy:
\begin{itemize}
\item[$(a1)$]
$\alpha_i+\beta_i+\gamma_i=\lambda_0$, for $i=0,\ldots,d$;
\item[$(a2)$]
$p_{i-1}(\lambda_0)\beta_{i-1}=p_i(\lambda_0)\gamma_i$, for $i=1,\ldots,d$.
\end{itemize}
\item[$(b)$]
$\displaystyle p_d(\lambda_0)=n\left(\sum_{i=0}^d \frac{\pi_0^2}{m_i\pi_i^2}\right)^{-1}$, where $\displaystyle \pi_i=\prod_{j\neq i}|\lambda_i-\lambda_j|$, for $i=0,\ldots,d$.
\item[$(c)$]
$1=q_0(\lambda_0)<q_1(\lambda_0) <\cdots <q_d(\lambda_0)=n$, and $q_d(\lambda_i)=0$ for every $i\neq 0$. Thus, $q_d=H$ is
the Hoffman polynomial characterizing the regularity of
$\G$ by the condition $H(\A)=\J$, where $\J$ stands for the  all-$1$ matrix $($see
Hoffman~\cite{hof63}\,$)$.
\item [$(d)$]
The three-term recurrence \eqref{3-term-recur} can be represented through a tridiagonal $(d+1)\times (d+1)$ matrix $\R$ such that, in the quotient ring $\Re[x]/(m)$, where $(m)$ is the ideal generated by the minimal polynomial $m=\prod_{i=0}^d(x-\lambda_i)$ of $\A$, it satisfies
\begin{equation}
\label{3-term-matrix}
x\p=x\left(\begin{array} {c} p_0\\p_1\\p_2\\
\vdots\\p_d\end{array}\right)= \left(\begin{array}
{ccccc}
\alpha_0  & \gamma_1    &            &               &              \\
\beta_0   & \alpha_1    & \gamma_2   &               &              \\
          & \beta_1     & \alpha_2   &               &              \\
          &             &            & \ddots        & \gamma_d     \\
          &             &            &  \beta_{d-1}  & \alpha_d     \\
\end{array}\right)
 \left(\begin{array} {c} p_0\\p_1\\p_2\\
\vdots\\p_d\end{array}\right)=\R\p\,.
\end{equation}
\end{itemize}
\end{lema}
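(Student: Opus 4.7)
I would prove the four parts in a different order than stated, because (a1) falls out most cleanly from the fact (established in (c)) that $q_d$ is the Hoffman polynomial. For the easy portion of (a): normalization of the constant polynomial $p_0$ forces $p_0 = 1$; for $p_1 = ax+b$, orthogonality to $p_0$ together with $\tr(\A) = 0$ gives $b = 0$, and the normalization $\|p_1\|_\G^2 = p_1(\lambda_0)$ combined with $\tr(\A^2) = 2e$ yields $a = \lambda_0/\overline{k}$. The three-term recurrence is the standard consequence of orthogonality: expanding $xp_i = \sum_{j=0}^{i+1} c_{ij}p_j$, the symmetry $\langle xp_i,p_j\rangle_\G = \langle p_i,xp_j\rangle_\G$ forces $c_{ij} = 0$ whenever $|i-j| > 1$. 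Identity (a2) follows by computing $\langle xp_i, p_{i-1}\rangle_\G$ in two ways and applying $\|p_i\|_\G^2 = p_i(\lambda_0)$.

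For (c), I would expand the Hoffman polynomial $H$ in the basis $\{p_0,\ldots,p_d\}$ of $\Re_d[x]$: writing $H = \sum a_i p_i$ and using $H(\lambda_0) = n$, $H(\lambda_i) = 0$ for $i \neq 0$, and $m_0 = 1$, the inner product reduces to $\langle H, p_i\rangle_\G = p_i(\lambda_0) = \|p_i\|_\G^2$, so $a_i = 1$ for every $i$ and hence $q_d = H$. The strict chain $1 = q_0(\lambda_0) < \cdots < q_d(\lambda_0) = n$ is then immediate from $p_i(\lambda_0) > 0$. With (c) in hand, (a1) follows from the observation that $xH$ and $\lambda_0 H$ agree at every eigenvalue, so $xq_d \equiv \lambda_0 q_d \pmod{m}$; summing the three-term recurrences over $i = 0,\ldots,d$ (where for $i = d$ the putative tail term $\gamma_{d+1}p_{d+1}$ is a polynomial of degree $d+1$ orthogonal to $\Re_d[x]$ and hence a scalar multiple of $m$, vanishing in the quotient) and matching coefficients of each $p_j$ in $\Re[x]/(m)$ yields $\alpha_i + \beta_i + \gamma_i = \lambda_0$ under the boundary conventions $\gamma_0 = \beta_d = 0$.

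For (b), I would represent $p_d$ via Lagrange interpolation, $p_d = \sum_i p_d(\lambda_i)L_i$, and translate the constraints $\langle p_d, x^k\rangle_\G = 0$ for $k < d$ into the linear system $\sum_j m_j p_d(\lambda_j)\lambda_j^k = 0$. The classical divided-difference identity $\sum_j \lambda_j^k/\prod_{i\neq j}(\lambda_j - \lambda_i) = 0$ for $k < d$ pins down the one-dimensional solution space as $p_d(\lambda_j) = (-1)^j C/(m_j \pi_j)$, and the normalization $\|p_d\|_\G^2 = p_d(\lambda_0)$ fixes $C$, producing the stated expression. Part (d) is then a matrix rewriting of (a), valid in $\Re[x]/(m)$ precisely because the recurrence for $xp_d$ closes there without a $p_{d+1}$ contribution. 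The main obstacle I anticipate is keeping careful track of what lives in $\Re[x]$ versus in $\Re[x]/(m)$: the recurrences for $xp_i$ with $i < d$ are ordinary polynomial identities, but both (a1) and (d) require passing to the quotient so that the edge case $i = d$ does not introduce a spurious $p_{d+1}$.
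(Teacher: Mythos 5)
The paper does not actually prove Lemma \ref{ortho-pol}; it is quoted from \cite{cffg09}, so there is no internal proof to compare against. Your argument is a correct and complete self-contained derivation, and it is essentially the standard one for these facts. The computations for $p_0$ and $p_1$ (using $\tr\A=0$ and $\tr\A^2=2e$), the three-term recurrence via the symmetry $\langle xf,g\rangle_\G=\langle f,xg\rangle_\G$, and $(a2)$ via the two evaluations of $\langle xp_i,p_{i-1}\rangle_\G$ are all right. Your route to $(c)$ — expanding the interpolating polynomial $H$ with $H(\lambda_0)=n$, $H(\lambda_i)=0$ in the $p_j$-basis and using $m_0=1$ to get all Fourier coefficients equal to $1$ — is the clean way to do it, and deducing $(a1)$ afterwards from $xq_d\equiv\lambda_0 q_d \pmod m$ by matching coefficients of each $p_j$ is valid, including the boundary conventions $\gamma_0=\beta_d=0$. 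For $(b)$, the Vandermonde system $\sum_j m_jp_d(\lambda_j)\lambda_j^k=0$ ($k<d$) indeed has a one-dimensional solution space, the divided-difference identity gives the generator $m_jp_d(\lambda_j)=(-1)^jC/\pi_j$, and the normalization $\|p_d\|_\G^2=p_d(\lambda_0)$ fixes $C$ and yields the stated formula. You are also right to flag the only delicate point, namely that for $i=d$ the recurrence and $(a1)$ hold only in $\Re[x]/(m)$ because $xp_d-\mathrm{proj}(xp_d)$ is a degree-$(d+1)$ polynomial vanishing at every eigenvalue, hence a multiple of $m$; that is exactly the content of part $(d)$. The one ingredient you take as given, $p_i(\lambda_0)>0$ (needed for the strict chain in $(c)$ and for the sign of $C$ in $(b)$), is explicitly assumed known in the paper, so relying on it is fine.
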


\subsection{The preintersection numbers}
The preintersection numbers can be seen as a generalization of the intersection numbers of a distance-regular graph, which are closely related to its combinatorial properties (see e.g. Biggs \cite{b93}). In the more general context of any graph, the preintersection numbers give us an algebraic information on the graph, which is of the same nature as the spectrum of its adjacency matrix.
More precisely, the {\em preintersection numbers} $\xi_{ij}^h$, $i,j,h\in\{0,\ldots,d\}$, are the Fourier
coefficients of $p_ip_j$ in terms of the basis $\{p_h\}_{0\le h\le d}$, that is,
\begin{equation}
\label{preintersec}
\xi_{ij}^h=\frac{\langle
p_ip_j,p_h\rangle_{\G}}{\|p_h\|_{\G}^2}=\frac{1}{np_h(\lambda_0)}\sum_{r=0}^d
m(\lambda_r) p_i(\lambda_r)p_j(\lambda_r)p_h(\lambda_r).
\end{equation}
Notice that, in particular, the coefficients of the three-term
recurrence \eqref{3-term-recur} are $\alpha_i = \xi_{1,i}^i$,
$\beta_i = \xi_{1,i+1}^i$, and $\gamma_i = \xi_{1,i-1}^i$. In fact, from our derivations will be clear that such coefficients determine all the other preintersection numbers.

\section{Formulas and procedures for equivalence}
In this section we study the equivalence between the three pieces of information described in Section~\ref{3pieces}. Namely, the spectrum, the predistance polynomials, and the preintersection numbers of a given graph.

\subsection{From the spectrum to the predistance polynomials}
\label{Sb3.1}
As mentioned above, the spectrum of a graph plays a central role in the study of its algebraic and combinatorial properties. To obtain the predistance polynomials introduced in Subsection~\ref{sec-predist} we consider the scalar product defined in (\ref{produc}) and apply the Gram-Schmidt orthogonalization method to the basis $\{1,x,\ldots, x^d\}$, normalizing the obtained sequence of orthogonal polynomials in such a way that $||p_i||^2=p_i(\lambda_0)$. (This makes sense since, from the theory of orthogonal polynomials, it is known that $p_i(\lambda_0)>0$ for any $i=0,\ldots,d$.) As mentioned in Lemma~\ref{ortho-pol}, $H=p_0+\cdots+p_d$ is the Hoffman polynomial satisfying $H(\lambda_i)=0$ for $i>0$, $H(\lambda_0)=n$, and characterizing the regularity of the graph by the condition $H(\A)=\J$.

\subsection{From the predistance polynomials to the spectrum}
\label{Sb3.2}
\label{pol->sp}
In this subsection we show how the spectrum of a graph $\Gamma$ can be obtained from its predistance polynomials.

\begin{propo}
\label{poly-sp}
Let $p_0,p_1,\ldots,p_d$ be the predistance polynomials of a graph $\Gamma$, with $\omega_i^j$ being the coefficient of $x^j$ in $p_i$.
Then,
\begin{enumerate}
\item[$(a)$]
The different eigenvalues $\lambda_i\neq \lambda_0$ of $\Gamma$ are the $d$ distinct zeros of the Hoffman polynomial $H=p_0+p_1+\cdots+p_d$.
\item[$(b)$]
The largest eigenvalue (spectral radius) is
\begin{equation}
\label{lambda0}
\textstyle\lambda_0=-\frac{\omega_1^1\omega_2^0}{\omega_2^2}.
\end{equation}
\item[$(c)$]
The multiplicity of each eigenvalue $\lambda_i$, for $i=0,\ldots, d$, is
\begin{equation}
\label{mults}
m_i=n\left( \sum_{j=0}^d\frac{p_j(\lambda_i)^2}{p_j(\lambda_0)}\right)^{-1}.
\end{equation}
\end{enumerate}
\end{propo}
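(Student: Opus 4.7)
The plan is to handle the three parts in the order (a), (c), (b), since (a) follows directly from cited facts, (c) uses a single clean orthogonal expansion, and (b) requires the most bookkeeping with the three-term recurrence.

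For part (a), I would just invoke Lemma~\ref{ortho-pol}(c) directly. It tells us that $H=p_0+p_1+\cdots+p_d$ is the Hoffman polynomial and satisfies $H(\lambda_0)=n$ and $H(\lambda_i)=0$ for every $i=1,\ldots,d$. Since $\deg H=\deg p_d=d$ and the $\lambda_i$ are pairwise distinct, these $d$ numbers account for all of its roots; thus, having only the polynomials in hand, we form $H$ and recover $\lambda_1,\ldots,\lambda_d$ as its complete set of zeros.

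For part (c), the plan is to expand a Lagrange interpolator in the orthogonal basis $\{p_j\}$. Fix $i\in\{0,\ldots,d\}$ and let $L_i\in\Re_d[x]$ be the polynomial of degree $d$ with $L_i(\lambda_r)=\delta_{ir}$. Expanding in the orthogonal basis using $\|p_j\|_\G^2=p_j(\lambda_0)$ gives
\begin{equation*}
L_i=\sum_{j=0}^{d}\frac{\langle L_i,p_j\rangle_\G}{p_j(\lambda_0)}\,p_j
   =\sum_{j=0}^{d}\frac{m_i\,p_j(\lambda_i)}{n\,p_j(\lambda_0)}\,p_j,
\end{equation*}
where I have used the weighted-sum form of $\langle\cdot,\cdot\rangle_\G$ from \eqref{produc}. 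Evaluating at $x=\lambda_i$ and using $L_i(\lambda_i)=1$ then yields $1=(m_i/n)\sum_j p_j(\lambda_i)^2/p_j(\lambda_0)$, which is exactly \eqref{mults}. Note that this step presupposes knowledge of $\lambda_0,\ldots,\lambda_d$, but those have already been obtained from (a) and (b).

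The most delicate part is (b). The idea is to identify the coefficient $\beta_0$ in the three-term recurrence directly from the leading polynomials, and then use $\alpha_0=0$ (which holds because $\alpha_0=\langle xp_0,p_0\rangle_\G=\tr(\A)/n=0$ as $\G$ has no loops) together with relation $(a1)$ of Lemma~\ref{ortho-pol}, which for $i=0$ reads $\alpha_0+\beta_0=\lambda_0$ under the convention $\gamma_0=0$. To read off $\beta_0$, substitute $p_1=\omega_1^1 x$ and $p_2=\omega_2^0+\omega_2^1 x+\omega_2^2 x^2$ into $xp_1=\beta_0 p_0+\alpha_1 p_1+\gamma_2 p_2$ and match coefficients: the $x^2$ term gives $\gamma_2=\omega_1^1/\omega_2^2$ and the constant term gives $\beta_0=-\gamma_2\,\omega_2^0=-\omega_1^1\omega_2^0/\omega_2^2$. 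Combining with $\lambda_0=\beta_0$ produces \eqref{lambda0}. The main obstacle here is essentially pedantic — keeping track of the boundary conventions ($\gamma_0=0$, $\beta_{-1}=0$) so that the reduced identity $\alpha_0+\beta_0=\lambda_0$ is valid, and ensuring that all inputs ($\omega_1^1$, $\omega_2^0$, $\omega_2^2$) are indeed available from the predistance polynomials alone, which they are by definition.
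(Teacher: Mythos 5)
Your proposal is correct and follows essentially the same route as the paper: part (a) is the same appeal to Lemma~\ref{ortho-pol}(c), and part (b) is the same coefficient-matching in the three-term recurrence at $i=1$ combined with $\lambda_0=\alpha_0+\beta_0$ (the paper gets $\alpha_0=0$ from $\omega_1^0=0$ rather than from $\tr\A=0$, but both are immediate). For part (c) the paper simply defers to the computation in Proposition~\ref{bbb}, namely $(\P^{-1}\P)_{ii}=1$ for the matrix $\P_{ij}=p_i(\lambda_j)$; your Lagrange-interpolation expansion is exactly that same orthogonality identity written out self-contained, so it is a valid (and slightly more explicit) presentation of the same argument.
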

\begin{proof}
$(a)$ As mentioned in Lemma~\ref{ortho-pol}, $H=p_0+\cdots+p_d$ is the Hoffman polynomial satisfying $H(\lambda_i)=0$ for $i=1,\ldots,d$.

$(b)$ The expressions for $p_0$ and $p_1$ (see Lemma \ref{ortho-pol}$(a)$) imply that $\omega_0^0=1$
and $\omega_1^0=0$. Then,
\begin{equation}
\label{small-values}
\alpha_0=0,\qquad \alpha_1=-\frac{\omega_2^1}{\omega_2^2}, \qquad\mbox{and}\qquad \beta_0=-\frac{\omega_1^1}{\omega_2^2}\omega_2^0,
\end{equation}
and \eqref{lambda0} follows from $\lambda_0=\alpha_0+\beta_0$.

$(c)$ This is a result from \cite{f02} (see the proof of Proposition \ref{bbb}.)


\end{proof}

The spectral radius can also be determined as the largest root of the polynomial
\begin{equation}
\label{det_l0}
h=\left(\sum_{i=1}^{d}\frac{\lambda_i}{\,p_d(\lambda_i)}\prod_{\substack{j=1\\ j\ne i}}^d\frac{x-\lambda_j}{\lambda_i-\lambda_j}\right)p_d(x)-x.
\end{equation}
This comes from the combination of the following two facts:
the multiplicity of each eigenvalue can be also calculated as
\begin{equation}
\label{multi}
m_i=\frac{\phi_0\,p_d(\lambda_0)}{\phi_i\,p_d(\lambda_i)},\quad  \textrm{for } i=0,\ldots, d,
\end{equation}
where $\phi_i=\prod_{j=0, j\ne i}^{d}(\lambda_0-\lambda_j)$, see \cite{f02},
and the sum of all the eigenvalues has to be zero,
$\sum_{i=0}^d m_i\lambda_i=\tr \A=0$.
Note that, in fact,  the polynomial $h$ has also the roots $\lambda_1,\ldots,\lambda_d$.

Another approach is to notice that each coefficient of $H(x)=\sum_{j=0}^d h_j x^j$ can be written
as $h_j=w_j^j+w_{j+1}^j+\cdots +w_{d}^j$, where $\omega_i^j$ is the coefficient of degree $j$ of the polynomial $p_i$. In particular, if $\Gamma$ is regular, then
$$
H(x)=\frac{n}{\pi_0}\prod_{i=1}^{d}(x-\lambda_i)=\frac{n}{\pi_0}\sum_{C\subset [d]}(-1)^{|C|}x^{d-|C|}\left( \prod_{j\in C}\lambda_j\right),
$$
where $[d]=\{0,1,\ldots,d\}$,  $\pi_0=\prod_{j=1}^{d}|\lambda_0-\lambda_j|$, and, hence, we have the system of $d$ equations
$$
h_j=w_j^j+w_{j+1}^j+\cdots +w^j_{d}=\frac{n}{\pi_0}\sum_{|C|=d-j}(-1)^{d-j}\left( \prod_{i\in C}{\lambda_i} \right)\qquad j=0,\ldots, d-1,
$$
with unknowns $\lambda_1,\ldots,\lambda_d$.

\subsection{From the predistance polynomials to the preintersection numbers}
\label{Sb3.4}
In this subsection we assume that the predistance polynomials of a graph $\G$ are given and, from them, we want to obtain its preintersection numbers. Of course, we could do so by applying \eqref{preintersec}, but this requires to know the spectrum of $\G$, which requires an intermediate computation (as shown in Subsection \ref{pol->sp}).
Consequently, we want to relate directly the preintersection numbers to (the coefficients of) the predistance polynomials. With this aim, we use both the three-term recurrence \eqref{3-term-recur} and the generic expression of each polynomial $p_i$ as above.
This leads to the following result.
\begin{propo}
\label{propo-pol-pre}
Given the predistance polynomials of a graph $\G$, $p_i=\sum_{j=0}^i \omega_i^j x^j$, its preintersection numbers are:
\begin{itemize}
\item[$(a)$]
$\alpha_0=-\frac{\omega_{1}^{0}}{\omega_{1}^{1}}$, \qquad $\alpha_i=\frac{\omega_{i}^{i-1}}{\omega_{i}^{i}}-\frac{\omega_{i+1}^{i}}{\omega_{i+1}^{i+1}}$ \quad $(1\leq i \leq d-1)$;
\item[$(b)$]
$\beta_i=\frac{\omega_{i+1}^{i-1}}{\omega_{i}^{i}}-\frac{\omega_{i+1}^{i}}{\omega_{i}^{i}}
\left(\frac{\omega_{i+1}^{i}}{\omega_{i+1}^{i+1}}-\frac{\omega_{i+2}^{i+1}}{\omega_{i+2}^{i+2}}\right)
-\frac{\omega_{i+1}^{i+1}}{\omega_{i+2}^{i+2}}\frac{\omega_{i+2}^{i}}{\omega_{i}^{i}}$\quad $(0\leq i \leq d-2)$;
\item[$(c)$]
$\gamma_i=\frac{\omega_{i-1}^{i-1}}{\omega_{i}^{i}}$\quad $(1\leq i \leq d)$.
\end{itemize}
\end{propo}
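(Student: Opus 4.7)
The plan is to extract all three identities by comparing polynomial coefficients on the two sides of the three--term recurrence stated in Lemma~\ref{ortho-pol}(a),
\[
x p_i = \beta_{i-1}p_{i-1}+\alpha_i p_i+\gamma_{i+1}p_{i+1},
\]
written in the monomial basis $1,x,\ldots,x^{d+1}$. Writing $p_i=\sum_{j=0}^{i}\omega_i^{j}x^{j}$, the left side $xp_i$ has coefficient $\omega_i^{j-1}$ in degree $j$, while on the right side each $p_k$ contributes $\omega_k^{j}$. Since $\deg p_{i-1}=i-1$, the term $\beta_{i-1}p_{i-1}$ is silent in degrees $\ge i$; since $\deg p_i=i$, the term $\alpha_i p_i$ is silent in degree $i+1$. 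So the degrees $i+1,i,i-1$ give a triangular linear system for the unknowns $\gamma_{i+1},\alpha_i,\beta_{i-1}$ that can be solved in that order.

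First I would read off the top degree: comparing the coefficient of $x^{i+1}$ gives $\omega_i^{i}=\gamma_{i+1}\omega_{i+1}^{i+1}$, i.e.\ $\gamma_{i+1}=\omega_i^{i}/\omega_{i+1}^{i+1}$. Shifting $i\mapsto i-1$ yields part $(c)$. Next, the coefficient of $x^{i}$ gives
\[
\omega_i^{i-1}=\alpha_i\omega_i^{i}+\gamma_{i+1}\omega_{i+1}^{i}.
\]
Solving for $\alpha_i$ and substituting $\gamma_{i+1}=\omega_i^{i}/\omega_{i+1}^{i+1}$ produces $\alpha_i=\omega_i^{i-1}/\omega_i^{i}-\omega_{i+1}^{i}/\omega_{i+1}^{i+1}$, which is part $(a)$ for $1\le i\le d-1$. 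The case $i=0$ is handled separately using $p_0=1$: the relation $xp_0=\alpha_0 p_0+\gamma_1 p_1$ and the explicit form of $p_1$ give $\alpha_0=-\omega_1^{0}/\omega_1^{1}$ (which indeed equals $0$ by Lemma~\ref{ortho-pol}(a), but the formula is stated in terms of the coefficients one sees).

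Finally, comparing coefficients of $x^{i-1}$ gives
\[
\omega_i^{i-2}=\beta_{i-1}\omega_{i-1}^{i-1}+\alpha_i\omega_i^{i-1}+\gamma_{i+1}\omega_{i+1}^{i-1}.
\]
Solving for $\beta_{i-1}$ and plugging in the formulas just obtained for $\alpha_i$ and $\gamma_{i+1}$ yields
\[
\beta_{i-1}=\frac{\omega_i^{i-2}}{\omega_{i-1}^{i-1}}
-\frac{\omega_i^{i-1}}{\omega_{i-1}^{i-1}}\Bigl(\frac{\omega_i^{i-1}}{\omega_i^{i}}-\frac{\omega_{i+1}^{i}}{\omega_{i+1}^{i+1}}\Bigr)
-\frac{\omega_i^{i}}{\omega_{i+1}^{i+1}}\,\frac{\omega_{i+1}^{i-1}}{\omega_{i-1}^{i-1}}.
\]
Relabeling $i-1\mapsto i$ (equivalently, replacing $i$ by $i+1$ everywhere) gives exactly the expression in part $(b)$, valid for $0\le i\le d-2$.

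The whole argument is just degree counting in the recurrence; the only delicate point is index management in the $\beta$ step, where one must substitute the previously computed $\alpha$ and $\gamma$ before reindexing. All the coefficients that appear in denominators, namely the leading coefficients $\omega_k^{k}$, are nonzero because $\deg p_k=k$, so the formulas are well defined.
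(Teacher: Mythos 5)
Your proposal is correct and follows essentially the same route as the paper: equate coefficients of degrees $i+1$, $i$, and $i-1$ in the three-term recurrence to solve successively for $\gamma_{i+1}$, $\alpha_i$, and $\beta_{i-1}$, then reindex (the paper likewise handles $\alpha_0$ and $\beta_0$ via the $i=0,1$ cases and the convention $\omega_1^{-1}=0$). Your substitution and relabeling in the $\beta$ step reproduce the stated formula exactly.
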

\begin{proof}
By using the expressions of $p_{i-1}$, $p_i$, and $p_{i+1}$ in \eqref{3-term-recur}, and considering the terms of degree $i+1$, we get
$$
\omega_{i}^{i}=\gamma_{i+1}\omega_{i+1}^{i+1},\qquad i=0,\ldots, d-1,
$$
giving $(c)$.

Analogously, from the term of degree $i$, we have
$$
\omega_{i}^{i-1}=\alpha_i\omega_{i}^{i}+\gamma_{i+1}\omega_{i+1}^{i}
$$
whence, by using the value of $\gamma_{i+1}$, we obtain
$$
\omega_{i}^{i-1}=\alpha_i\omega_{i}^{i}+\frac{\omega_{i}^{i}}{\omega_{i+1}^{i+1}}\omega_{i+1}^{i}
$$
giving $(a)$ for $1\leq i\leq d-1$. The value of $\alpha_0$ is obtained from \eqref{3-term-recur} with $i=0$ and the value of $\gamma_1$.

Finally, looking at the terms of degree $i-1$:
$$
\omega_{i}^{i-2}=\beta_{i-1}\omega_{i-1}^{i-1}+\alpha_i\omega_{i}^{i-1}+\gamma_{i+1}\omega_{i+1}^{i-1},
$$
and using the values for $\alpha_i$ and $\gamma_{i+1}$, we have
$$
\omega_{i}^{i-2}=\beta_{i-1}\omega_{i-1}^{i-1}+\left(\frac{\omega_{i}^{i-1}}{\omega_{i}^{i}}
-\frac{\omega_{i+1}^{i}}{\omega_{i+1}^{i+1}}\right)\omega_{i}^{i-1}
+\frac{\omega_{i-1}^{i-1}}{\omega_{i}^{i}}\omega_{i+1}^{i-1}.
$$
This yields the value of $\beta_i$ for $1\leq i\leq d-2$. The value of $\beta_0$ is obtained from \eqref{3-term-recur} with $i=1$, and the values of $\alpha_1$ and $\gamma_2$. This also yields $(b)$ with $i=0$, by setting $\omega_{1}^{-1}=0$.
\end{proof}

Note that, in the above result, $\alpha_d$ and $\beta_{d-1}$ do not need to be mentioned, since they are computed by using Lemma \ref{ortho-pol}$(a1)$ with  $\lambda_0=\alpha_0+\beta_0$.

\subsection*{A matrix approach}
The above computation can be also carried out by using a matrix approach.
To this end, let us consider the given matrices
\begin{equation}
\label{Omega-U}
\OO=\left(\begin{array}
{ccccc}
\omega_{0}^{0}  &             &            &               &              \\
\omega_{1}^{0}  & \omega_{1}^{1}&            &               &              \\
\omega_{2}^{0}  & \omega_{2}^{0}& \omega_{2}^{2}  &               &              \\
          \vdots    &             &            & \ddots        &           \\
 \omega_{d}^{0}&             &     \hdots       &               & \omega_{d}^{d}     \\
\end{array}\right)\qquad\mbox{and}\qquad \U=\left(\begin{array}
{ccccc}
0       &  1           & 0         & \hdots   &   0           \\
0       &  0           & 1         &          &              \\
\vdots  &              &           & \ddots   &              \\
 0      &  \hdots      &           & 0        &   1 \\
 0      &  0           & 0         & \hdots   &   0
\end{array}\right),
\end{equation}
where, as above, the $\omega_i^j$, $i,j=0,\ldots,d$ stand for the coefficients of the predistance polynomials. From them, we want to find the tridiagonal matrix of the preintersection numbers of $\Gamma$:
 $$
\R = \left(\begin{array}
{ccccc}
\alpha_0  & \gamma_1    &            &             &                \\
\beta_0   & \alpha_1    & \gamma_2   &             &                \\
          & \beta_1     & \alpha_2   &             &                \\
          &             &            & \ddots      & \gamma_d       \\
          &             &            & \beta_{d-1} & \alpha_d
\end{array}\right).
$$

Then, we have the following result.
\begin{propo}
\label{matrixapproach}
Let $\Gamma$ be a graph  with predistance polynomials $p_0,\ldots,p_d$, and coefficient matrix $\OO$. Let $\OO'$ and $\R'$ be the matrices obtained, respectively, from $\OO$ and $\R$ by removing its last row. Then,
$$
\R'=\OO' \U \OO^{-1},
$$
\end{propo}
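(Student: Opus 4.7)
The plan is to translate the three-term recurrence \eqref{3-term-matrix}, which expresses multiplication by $x$ in the basis $\{p_0,\ldots,p_d\}$, into the monomial basis $\{1,x,\ldots,x^d\}$, whose change-of-basis matrix is precisely $\OO$. Writing $\vec{x}=(1,x,x^2,\ldots,x^d)^T$, the definition of $\OO$ is nothing but the identity $\p=\OO\vec{x}$ of column vectors of polynomials; in particular, $\p'=\OO'\vec{x}$.

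The first key step is a careful reading of Lemma \ref{ortho-pol}$(d)$. Although the identity $x\p=\R\p$ is stated only in the quotient ring $\Re[x]/(m)$, its first $d$ components
$$
xp_i=\beta_{i-1}p_{i-1}+\alpha_i p_i+\gamma_{i+1}p_{i+1}\qquad(0\le i\le d-1)
$$
are genuine polynomial identities, since $xp_i$ has degree $i+1\le d$ and no reduction modulo $m$ is needed. Hence
$x\p'=\R'\p=\R'\OO\vec{x}$ as an identity of polynomial vectors of length $d$.

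The second key step rewrites the same quantity through the monomial side. From $\p'=\OO'\vec{x}$ we get $x\p'=\OO'(x\vec{x})$. Now $x\vec{x}$ and $\U\vec{x}$ agree in their first $d$ entries (both equal $x,x^2,\ldots,x^d$) and differ only in the last, where $x\vec{x}$ has $x^{d+1}$ while $\U\vec{x}$ has $0$. The point is that the last column of $\OO'$ vanishes: the only nonzero entry in the last column of $\OO$ is $\omega_d^d$, and it lies in the row that has been removed. Consequently $\OO'(x\vec{x})=\OO'(\U\vec{x})=\OO'\U\vec{x}$, again as an actual polynomial identity, with no appeal to the quotient ring.

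Combining the two expressions yields $\R'\OO\vec{x}=\OO'\U\vec{x}$. Since the components $1,x,\ldots,x^d$ of $\vec{x}$ are linearly independent over $\Re$, we may equate coefficients to obtain the matrix identity $\R'\OO=\OO'\U$. Finally, because $\dgr p_i=i$ forces every diagonal entry $\omega_i^i$ to be nonzero, the lower triangular matrix $\OO$ is invertible, and right multiplication by $\OO^{-1}$ gives the claimed formula. The main subtlety, and the reason the statement features $\R'$ rather than $\R$, is exactly the first step: restricting to the first $d$ rows is what allows us to move between polynomial identities and identities in $\Re[x]/(m)$ without having to reduce $x^{d+1}$ via the minimal polynomial, and this is precisely what makes the clean monomial-basis identity $\OO'(x\vec{x})=\OO'\U\vec{x}$ available.
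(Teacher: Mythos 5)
Your proof is correct and follows essentially the same route as the paper: express $x\p'$ both via the truncated recurrence as $\R'\OO\x$ and via the monomial basis as $\OO'\U\x$, then use linear independence of $1,x,\ldots,x^d$ and the invertibility of the triangular matrix $\OO$. Your added justifications (that the first $d$ recurrence rows are genuine polynomial identities needing no reduction modulo $m$, and that the vanishing last column of $\OO'$ reconciles $x\x$ with $\U\x$) make explicit two points the paper passes over silently.
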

\begin{proof}
By using the (column) vectors $\p=(p_0,p_1,\ldots,p_d)^{\top}$ and $\x=(1,x,\ldots,x^d)^{\top}$, and $\p'$ and $\x'$ obtained from $\p$ and $\x$ by deleting the last entry, we have $\p=\OO\x$, $\p'=\OO'\x'$, and $x\x'=\U\x$. Moreover, the first $d$ equations in  \eqref{3-term-matrix} are $x\p'=\R'\p$. Then, all together yields
$$
x\OO'\x'=\OO'\U\x=\R'\OO\x,
$$
so that $(\OO'\U-\R'\OO)\x=\vec0$ and, then, it must be $\OO'\U=\R'\OO$, whence the result follows.
\end{proof}
Finally, the last row of $\R$ is computed by using Lemma \ref{ortho-pol}$(a1)$.

\subsection{From the preintersection numbers to the predistance polynomials}
\label{Sb3.3}
To obtain the predistance polynomials from the preintersection numbers of a graph $\Gamma$, we just need to apply the three-term recurrence \eqref{3-term-recur} which, initialized by $p_0=1$, yields:
\begin{equation}
\label{pi-by-recur}
p_{i}=\frac{1}{\gamma_i}[ (x- \alpha_{i-1}) p_{i-1} -\beta_{i-2}p_{i-2}], \quad i=1,\ldots,d.
\end{equation}
In particular, as stated in Lemma \ref{ortho-pol}$(a)$, we get $p_1=(\lambda_0/\overline{k})x$, so that  $\Gamma$ is regular if and only if $p_1=x$.

Alternatively, we can also compute $p_i$ directly by using the principal submatrix of the recurrence matrix $\R$ in \eqref{3-term-matrix}. Namely,
$$
\R_i = \left(\begin{array}
{ccccc}
\alpha_0  & \gamma_1    &            &               &              \\
\beta_0   & \alpha_1    & \gamma_2   &               &              \\
          & \beta_1     & \alpha_2   &               &              \\
          &             &    & \ddots                 & \gamma_i     \\
          &             &            &   \beta_{i-1}    & \alpha_i     \\
\end{array}\right),\qquad i=0,1,\ldots,d.
$$
\begin{propo}
The predistance polynomial $p_i$ associated to the recurrence matrix $\R$ is
\begin{equation}
p_i=\frac{1}{\gamma_0\cdots\gamma_i}p_c(\R_{i-1}), \qquad i=1,\ldots,d,
\end{equation}
where $p_c(\R_{i-1})$ stands for the characteristic polynomial of $\R_{i-1}$.
\end{propo}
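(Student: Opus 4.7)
The plan is to prove this by induction on $i$, showing that both sides of the claimed identity satisfy the same three-term recurrence \eqref{pi-by-recur} (reading the formula with the natural convention $\gamma_0=1$, since $\gamma_0$ is not defined by the recurrence data).

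First I would dispatch the base cases. For $i=1$, the matrix $\R_0$ is the $1\times 1$ scalar $\alpha_0$, so $p_c(\R_0)=x-\alpha_0$; on the other hand, \eqref{3-term-recur} with $i=0$ and $\beta_{-1}=0$ reads $xp_0=\alpha_0 p_0+\gamma_1 p_1$, which together with $p_0=1$ gives $p_1=(x-\alpha_0)/\gamma_1$. So the formula holds for $i=1$. For $i=2$, a direct application of \eqref{pi-by-recur} and a $2\times 2$ determinant confirm $p_2=\frac{1}{\gamma_1\gamma_2}p_c(\R_1)$, providing an anchor for the induction.

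For the inductive step I would invoke the well-known three-term recurrence satisfied by the characteristic polynomials of the nested leading principal submatrices of a tridiagonal matrix. Writing $\phi_k(x):=p_c(\R_k)$ and expanding $\det(xI-\R_{i-1})$ along its last row (or equivalently applying cofactor expansion and collecting the two nonzero contributions from the $(i-1,i-1)$ and $(i-1,i-2)$ entries), one obtains
\begin{equation}
\label{planrec}
\phi_{i-1}(x)=(x-\alpha_{i-1})\,\phi_{i-2}(x)-\beta_{i-2}\gamma_{i-1}\,\phi_{i-3}(x),
\end{equation}
with the conventions $\phi_{-1}=1$ and $\phi_{-2}=0$. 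Dividing \eqref{planrec} by $\gamma_1\cdots\gamma_i$, pulling a factor $1/\gamma_i$ outside, and using the induction hypothesis $p_{i-1}=\phi_{i-2}/(\gamma_1\cdots\gamma_{i-1})$ and $p_{i-2}=\phi_{i-3}/(\gamma_1\cdots\gamma_{i-2})$, one gets
\[
\frac{\phi_{i-1}(x)}{\gamma_1\cdots\gamma_i}=\frac{1}{\gamma_i}\bigl[(x-\alpha_{i-1})\,p_{i-1}-\beta_{i-2}\,p_{i-2}\bigr],
\]
and the right-hand side is exactly $p_i$ by \eqref{pi-by-recur}. This closes the induction.

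The only delicate point is the derivation of \eqref{planrec}, which is a standard but easy-to-botch cofactor expansion: the product of the two off-diagonal entries $\gamma_{i-1}$ and $\beta_{i-2}$ (not just one of them) must appear in the coefficient of $\phi_{i-3}$, and the sign is positive because two row/column swaps are effectively needed. Beyond that, the argument is essentially an indexing exercise, and the convention $\gamma_0=1$ (or equivalently writing the denominator as $\gamma_1\cdots\gamma_i$) should be explicitly stated to keep the base case $i=1$ consistent with the statement.
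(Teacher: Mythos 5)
Your proof is correct and follows essentially the same route as the paper's: induction anchored at $i=1,2$, with the inductive step given by cofactor expansion of the tridiagonal determinant $\det(x\I-\R_{i-1})$ along its last row/column and comparison with the recurrence \eqref{pi-by-recur}. You merely make explicit the recurrence $\phi_{i-1}=(x-\alpha_{i-1})\phi_{i-2}-\beta_{i-2}\gamma_{i-1}\phi_{i-3}$ and the convention $\gamma_0=1$, both of which the paper leaves implicit.
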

\begin{proof}
By induction.
The result holds for $i=1,2$ since, by \eqref{pi-by-recur}, we get
$$
p_1=\frac{1}{\gamma_1}(x-\alpha_0)=\frac{1}{\gamma_1}\p_c(\R_0),\qquad
p_2=\frac{1}{\gamma_1\gamma_2}[(x-\alpha_0)(x-\alpha_1)-\beta_0\gamma_1]
=\frac{1}{\gamma_1\gamma_2}p_c(\R_1).
$$
Then, we assume that the result holds for all values smaller than $i(\ge 3)$ and prove that $\det(x\I-\R_{i-1})=p_i$ developing by the last column.
\end{proof}

Also, we can obtain explicit formulas for the coefficients of the polynomials in terms of the preintersection numbers.

\begin{lema}
Given the preintersection numbers $\alpha_i$, $\beta_i$, and $\gamma_i$ of a graph $\G$, the three coefficients of the higher degree terms of its predistance polynomial $p_i=\omega_i^ix^i+\omega_i^{i-1}x^{i-1}+\cdots+\omega_i^0$, are:
\begin{itemize}
\item[$(i)$]
$\omega_i^i=\frac{1}{\gamma_1\gamma_2\cdots\gamma_i}$;
\item[$(ii)$]
$\omega_i^{i-1}=-\frac{\alpha_0+\cdots+\alpha_{i-1}}{\gamma_1\gamma_2\cdots\gamma_i}$;
\item[$(iii)$]
$\omega_i^{i-2}=\dfrac{\sum_{0\leq r<s\leq i-1} \alpha_r\alpha_s-\sum_{r=0}^{i-2} \beta_r\gamma_{r+1}}{\gamma_1\gamma_2\cdots\gamma_i}$.
\end{itemize}
\end{lema}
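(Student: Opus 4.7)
The plan is to derive (i)--(iii) as a direct corollary of the preceding proposition, which expresses $p_i$ as $\frac{1}{\gamma_1\cdots\gamma_i}\,p_c(\R_{i-1})$, the normalized characteristic polynomial of the leading $i\times i$ principal submatrix of the tridiagonal recurrence matrix $\R$. Since $p_c(\R_{i-1})$ is monic of degree $i$, reading off its three highest-order coefficients and dividing by $\gamma_1\cdots\gamma_i$ produces precisely $\omega_i^i$, $\omega_i^{i-1}$, and $\omega_i^{i-2}$.

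Write $p_c(\R_{i-1})=x^i-c_1x^{i-1}+c_2x^{i-2}-\cdots$. The standard fact that $c_k$ equals the sum of all $k\times k$ principal minors of $\R_{i-1}$ immediately gives (i) (the polynomial is monic) and (ii), since $c_1=\tr(\R_{i-1})=\alpha_0+\cdots+\alpha_{i-1}$. For (iii), each $2\times 2$ principal minor indexed by a pair $\{r,s\}$ with $0\le r<s\le i-1$ equals $\alpha_r\alpha_s$ whenever $s>r+1$ (because $\R_{i-1}$ is tridiagonal, so the relevant off-diagonal entries vanish) and equals $\alpha_r\alpha_{r+1}-\beta_r\gamma_{r+1}$ when $s=r+1$. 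Summing over all such pairs yields $c_2=\sum_{0\le r<s\le i-1}\alpha_r\alpha_s-\sum_{r=0}^{i-2}\beta_r\gamma_{r+1}$, which gives (iii) after the division.

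An equivalent, more elementary route is a direct induction on $i$ using the three-term recurrence $\gamma_{i+1}p_{i+1}=(x-\alpha_i)p_i-\beta_{i-1}p_{i-1}$: comparing coefficients of $x^{i+1}$, $x^i$, and $x^{i-1}$ on both sides produces first-order recurrences for $\omega_{i+1}^{i+1}$, $\omega_{i+1}^i$, and $\omega_{i+1}^{i-1}$ in terms of the same-indexed coefficients of $p_i$ and $p_{i-1}$. Substituting the inductive hypotheses and verifying the base cases $i\in\{0,1,2\}$ directly (using $\alpha_0=0$, which comes from $p_1=(\lambda_0/\overline{k})x$) then yields the stated formulas.

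There is no real obstacle here---both approaches reduce the claim to routine bookkeeping. The only mildly delicate step, in the inductive version, is the identity $\sum_{0\le r<s\le i-1}\alpha_r\alpha_s+\alpha_i\sum_{r=0}^{i-1}\alpha_r=\sum_{0\le r<s\le i}\alpha_r\alpha_s$, needed to absorb the extra $\alpha_i\omega_i^{i-1}$ term that appears when comparing coefficients at the $x^{i-1}$ level in the inductive step for (iii).
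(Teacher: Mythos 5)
Your proposal is correct, and your primary route is genuinely different from the paper's. The paper proves (i)--(iii) exactly by your ``equivalent, more elementary route'': induction on the three-term recurrence $p_i=\frac{1}{\gamma_i}[(x-\alpha_{i-1})p_{i-1}-\beta_{i-2}p_{i-2}]$, comparing the coefficients of $x^i$, $x^{i-1}$, $x^{i-2}$ and substituting the previously obtained expressions (including the bookkeeping identity $\sum_{0\le r<s\le i-1}\alpha_r\alpha_s+\alpha_{i-1}\sum_{r\le i-2}\alpha_r=\sum_{0\le r<s\le i-1}\alpha_r\alpha_s$ in the appropriate form, which you correctly flag as the only delicate point). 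Your main argument instead reads the coefficients off the characteristic polynomial: since $p_i=\frac{1}{\gamma_1\cdots\gamma_i}\,p_c(\R_{i-1})$ and the coefficient of $x^{i-k}$ in $\det(x\I-\R_{i-1})$ is $(-1)^k$ times the sum of the $k\times k$ principal minors, (i) is monicity, (ii) is the trace, and (iii) follows because the only $2\times 2$ principal minors of a tridiagonal matrix that differ from $\alpha_r\alpha_s$ are the consecutive ones, contributing $-\beta_r\gamma_{r+1}$. This is cleaner, delivers all three formulas uniformly, and in principle extends to every coefficient $\omega_i^{i-k}$ via the matching-type expansion of principal minors of a tridiagonal matrix; its cost is that it leans on the preceding proposition, which the paper states with a typo in the normalizing constant (it reads $\gamma_0\cdots\gamma_i$ where it should be $\gamma_1\cdots\gamma_i$, as you correctly use) and whose proof is itself only sketched there, whereas the paper's inductive argument is self-contained. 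One minor remark: your appeal to $\alpha_0=0$ in the base cases is unnecessary --- the stated formulas carry $\alpha_0$ explicitly and hold without that normalization.
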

\begin{proof}
By using induction with the three-term recurrence \eqref{3-term-recur}, we get:
\begin{enumerate}
\item[$(i)$]
The principal coefficient of the polynomial
$p_i=\frac{1}{\gamma_i}[(x-\alpha_{i-1})p_{i-1}-\beta_{i-2}p_{i-2}]$ is the principal coefficient of $\frac{1}{\gamma_i}p_{i-1}$, that is, $\frac{1}{\gamma_i}\omega_{i-1}^{i-1}$.
\item[$(ii)$]
The second coefficient of $p_i$ can be expressed in terms of the previous coefficients as:
$$
\omega_{i}^{i-1}=\frac{\omega_{i-1}^{i-2}-\alpha_{i-1}\omega_{i-1}^{i-1}}{\gamma_i},
$$
and using the first statement we have:
$$
\omega_{i}^{i-1}=\frac{\omega_{i-1}^{i-2}}{\gamma_i}-\alpha_{i-1}\frac{1}{\gamma_2\ldots\gamma_i}.
$$
\item[$(iii)$]
For the coefficient of the third highest degree term, we get:
$$
\omega_{i}^{i-2}=\frac{\omega_{i-1}^{i-3}-\alpha_{i-1}\omega_{i-1}^{i-2}-\beta_{i-2}
\omega_{i-2}^{i-2}}{\gamma_i},
$$
which, in addition with the previous results, it can be expressed as:
$$
\omega_{i}^{i-2}=\frac{\omega_{i-1}^{i-3}}{\gamma_i}-\frac{\alpha_1\alpha_{i-1}
+\cdots+\alpha_{i-2}\alpha_{i-1}}{\gamma_2\ldots\gamma_i}
-\frac{\beta_{i-2}\gamma_{i-1}}{\gamma_2\ldots\gamma_i}.
$$
\end{enumerate}
\end{proof}
Of course, this procedure can be carried on by calculating each $\omega_i^j$ from the three-term recurrence and using the expressions of the previously computed $\omega_i^{i},\ldots,\omega_i^{j+1}$.

The above computations can be also carried out by using a matrix approach.
Indeed, they can be set as a linear system by using the matrix approach in  Proposition \ref{matrixapproach} of the previous subsection.

\subsection{From the preintersection numbers to the spectrum}
\label{Sb3.5}

Let us now see how the preintersections numbers of a graph allow us to compute its spectrum.

\begin{propo}
\label{bbb}
Given a graph $\Gamma$ with $d$ distinct eigenvalues and matrix $\R$ of preintersection numbers, its spectrum $\spec \G  = \{\lambda_0^{m_0},\lambda_1^{m_1},\ldots,\lambda_d^{m_d}\}$ can be computed in the following way:
\begin{itemize}
\item[$(a)$]
The different eigenvalues $\lambda_0>\lambda_1>\cdots >\lambda_d$ of $\Gamma$ are the eigenvalues of $\R$, that is the (distinct) zeros of its characteristic polynomial
$p_c(\R)=\det (x\I-\R)$.
\item[$(b)$]
Let $\u_i$ and $\v_i$ be the standard (with first component $1$) left and right eigenvectors corresponding to $\lambda_i$. Then, the  multiplicities are given by the formulas
\begin{equation}
\label{mult-Biggs}
m_i=\frac{n}{\langle \u_i,\v_i\rangle}\qquad i=0,\ldots,d,
\end{equation}
where $n=\langle \u_0,\v_0\rangle$ is the number of vertices of $\G$.
\end{itemize}
\end{propo}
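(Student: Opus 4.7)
The plan is to leverage the tridiagonal recurrence matrix $\R$ which, by Lemma \ref{ortho-pol}$(d)$, represents multiplication by $x$ on the basis $\{p_0,\ldots,p_d\}$ of the quotient $\Re[x]/(m)$, where $m$ is the minimal polynomial of $\A$. For part $(a)$, I would evaluate the identity $x\p=\R\p$ at any eigenvalue $\lambda_i$; since $\lambda_i$ is a root of $m$, the evaluation in the quotient ring is legitimate and yields $\R\p(\lambda_i)=\lambda_i\p(\lambda_i)$, where $\p(\lambda_i):=(p_0(\lambda_i),\ldots,p_d(\lambda_i))^{\top}$. Because $p_0(\lambda_i)=1$, this vector is nonzero, so each $\lambda_i$ is an eigenvalue of $\R$. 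The $d+1$ distinct values $\lambda_0>\cdots>\lambda_d$ then exhaust the spectrum of the $(d+1)\times(d+1)$ matrix $\R$, and they are precisely the roots of $p_c(\R)$. This simultaneously identifies the standard right eigenvector as $\v_i=\p(\lambda_i)$.

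For the left eigenvector in $(b)$, the key observation is that Lemma \ref{ortho-pol}$(a2)$, rewritten as $p_i(\lambda_0)\beta_i=p_{i+1}(\lambda_0)\gamma_{i+1}$, is precisely the condition needed for the diagonal conjugation $\D^{-1}\R\D$ with $\D=\diag(\sqrt{p_j(\lambda_0)})$ to be a symmetric tridiagonal matrix. Because symmetric matrices have coinciding left and right eigenvectors, one finds that if $\v_i$ is a right eigenvector of $\R$, then $\D^{-2}\v_i$ is a left eigenvector. Explicitly, its $j$-th component is $p_j(\lambda_i)/p_j(\lambda_0)$, which is already normalized with first entry $1$ since $p_0\equiv 1$.

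With both eigenvectors in hand, a direct computation gives
\[
\langle\u_i,\v_i\rangle=\sum_{j=0}^d\frac{p_j(\lambda_i)^2}{p_j(\lambda_0)}.
\]
In particular, for $i=0$ this collapses to $\sum_j p_j(\lambda_0)=H(\lambda_0)=n$ by Lemma \ref{ortho-pol}$(c)$, which simultaneously confirms the identification $n=\langle\u_0,\v_0\rangle$ and recovers $m_0=1$. To establish the general formula $m_i=n/\langle\u_i,\v_i\rangle$, I would package the orthogonality relations $\|p_i\|_{\G}^2=p_i(\lambda_0)$ under the scalar product \eqref{produc} as the matrix identity $\P\M\P^{\top}=n\D_p$, where $\P=(p_i(\lambda_j))_{i,j}$, $\M=\diag(m_j)$, and $\D_p=\diag(p_i(\lambda_0))$. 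Inverting and reading off the $j$-th diagonal entry of $\P^{\top}\D_p^{-1}\P=n\M^{-1}$ yields exactly $\sum_i p_i(\lambda_j)^2/p_i(\lambda_0)=n/m_j=\langle\u_j,\v_j\rangle$, as required.

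The main obstacle will be the symmetrization step: one must verify carefully that the scaling dictated by (a2) produces a genuine symmetric tridiagonal matrix, which in turn equates left and right eigenvectors up to the explicit rescaling by $1/p_j(\lambda_0)$. Once that is in place, the multiplicity formula drops out from the Vandermonde-type orthogonality identity with essentially no further computation.
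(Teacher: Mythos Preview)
Your argument is correct and, for part $(a)$, identical in spirit to the paper's: the columns of $\P=(p_i(\lambda_j))$ are right eigenvectors of $\R$, which gives $\R\P=\P\D$ and hence the eigenvalues.

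For part $(b)$ you take a genuinely different route to identify the left eigenvectors. The paper simply observes that $\P^{-1}\R=\D\P^{-1}$, so the rows of $\P^{-1}$ are left eigenvectors, and then writes $\P^{-1}$ explicitly by reading off the orthogonality relations $\langle p_i,p_j\rangle_\G=\delta_{ij}p_i(\lambda_0)$; the diagonal of $\P^{-1}\P=\I$ then gives \eqref{mults} directly. You instead symmetrize $\R$ via $\D^{-1}\R\D$ with $\D=\diag(\sqrt{p_j(\lambda_0)})$, using Lemma~\ref{ortho-pol}$(a2)$ to check symmetry, and deduce that $\u_i=\D^{-2}\v_i$. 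Both approaches are short and rest on the same orthogonality data; your matrix identity $\P\M\P^{\top}=n\D_p$ is exactly the relation the paper inverts to exhibit $\P^{-1}$. The symmetrization has the pedagogical advantage of \emph{explaining} why the left eigenvector is the right one rescaled by $1/p_j(\lambda_0)$, rather than simply verifying it, and it makes the role of $(a2)$ more transparent; the paper's version, on the other hand, avoids square roots and the need to check the off-diagonal entries match.
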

\begin{proof}
Let $\P$ be the matrix indexed with $0,\ldots,d$, and with entries $\P_{ij}=p_i(\lambda_j)$. Then, because of \eqref{3-term-matrix}, its $i$-th column $\v_i$ is a right $\lambda_i$-eigenvector of the recurrence matrix $\R$:
$\R\P=\P\D$, where $\D=\diag(\lambda_0,\ldots,\lambda_d)$. Then, as $\P^{-1}\R=\D\P^{-1}$, the $i$-th row $\u_i$ of $\P^{-1}$ is a left $\lambda_i$-eigenvector of $\R$.
Moreover, because of the orthogonal property of the predistance polynomials with respect to
the scalar product (\ref{produc}), the inverse of the matrix
$\P$ is
$$
\P^{-1}=\frac{1}{n}\left( \begin{array}{cccc}
m_0\frac{p_0(\lambda_0)}{n_0} &
m_0\frac{p_1(\lambda_0)}{n_1} &\ldots &
m_0\frac{p_d(\lambda_0)}{n_d}          \\
m_1\frac{p_0(\lambda_1)}{n_0}
&m_1\frac{p_1(\lambda_1)}{n_1} &\ldots &
m_1\frac{p_d(\lambda_1)}{n_d}   \\
 \vdots  & \vdots  &   & \vdots          \\
m_d\frac{p_0(\lambda_d)}{n_0} &
m_d\frac{p_1(\lambda_d)}{n_1} &\ldots &
m_d\frac{p_d(\lambda_d)}{n_d}
\end{array} \right)
$$
where $n_i=p_i(\lambda_0)$. Then, from
$(\P^{-1}\P)_{ii}=1$, $0\le i\le d$, we get
\begin{equation}
\label{mults}
m_i=n\left( \sum_{j=0}^d\frac{p_j(\lambda_i)^2}{p_j(\lambda_0)}\right)^{-1}=\frac{n}{\langle \u_i,\v_i\rangle}\qquad i=0,\ldots,d,
\end{equation}
as claimed. Finally, notice that, as $m_0=1$, $n=\langle \u_0,\v_0\rangle$.
\end{proof}
Note also  that, in \eqref{mults}, the right and left eigenvectors are, respectively, \linebreak $\u_i=(p_0(\lambda_i),p_1(\lambda_i),\ldots,p_d(\lambda_i))^{\top}$, and
$\u_i=\left(\frac{p_0(\lambda_i)}{p_0(\lambda_0)},\frac{p_1(\lambda_i)}{p_1(\lambda_0)},
\ldots,\frac{p_d(\lambda_i)}{p_d(\lambda_0)}\right)$.
In the particular case when $\G$ is a distance-regular graph, an alternative proof of \eqref{mult-Biggs} without using the orthogonal polynomials was given by Biggs \cite{b93}.

\subsection{From the spectrum to the preintersection numbers}
\label{Sb3.6}
As far as we know, in the case of distance-regular graphs there were no formulas relating directly the preintersection numbers to the eigenvalues and multiplicities of a graph. Within this context,
in the Appendix of the paper by Van Dam and Haemers \cite{vdh02}, the authors wrote the following:
``In this appendix we sketch a proof of the following result: for a distance-regular graph the
spectrum determines the intersection array. This less-known but relevant result (mentioned
in the introduction) has been observed before, but it does not seem to be readily available in
the literature."
Their method consists of three steps: first, use the scalar product \eqref{produc} to find the distance polynomials, as explained in Subsection \ref{Sb3.1} (apply Gram-Schmidt orthogonalisation and the normalization condition); second, compute the distance matrices of the graph by applying the distance polynomials to its adjacency matrix; and third, calculate the intersection parameters from the distance matrices.

However, in our context of a general graph, this method does not apply, since neither the distance matrices can be obtained from the predistance polynomials, not the preintersection numbers are related to such matrices.
Instead, an alternative would be to compute the predistance polynomials as in Subsection \ref{Sb3.1}, and then calculate the preintersection numbers by applying the results of Subsection \ref{Sb3.4}. Let us see that, if we follow properly this procedure, we can obtain explicit formulas for the preintersection numbers in terms only of the information given by the spectrum.
To this end, we call into play the average numbers of closed walks as a new piece of information.
In fact, these averages also determine univocally the spectrum, in the same way as the predistance polynomials and the preintersection numbers do. These averages can be seen as a generalization of the numbers of closed $\ell$-walks in a distance-regular graph, where, for any fixed length $\ell$, they do not depend on the root vertex.

\begin{propo}
\label{propo-sp-preintersec}
Let $\G$ be a graph. Then, its preintersection numbers can be computed directly from its spectrum $\spec \G$ by using the  average number of closed walks of length $\ell$, that is,
$c(\ell)=\frac{1}{n}tr(\A^{\ell})=\frac{1}{n}\sum_{i=0}^d m_i\lambda_i^{\ell}$, $\ell=0,1,2,\ldots$, and their first values are:
\begin{align}
\alpha_0 &= 0, \qquad \beta_0 = \lambda_0, \label{i=0}\\
\gamma_1 &=\frac{c(2)}{\lambda_0},\qquad \alpha_1 =\frac{c(3)}{c(2)}, \qquad \beta_1 = \lambda_0-\alpha_1-\gamma_1, \label{i=1}\\
\gamma_2 &= \frac{\lambda_0[c(2)c(4)-c(3)^2-c(2)^3]}{c(2)[\lambda_0^2c(2)-c(3)\lambda_0-c(2)^2]}, \quad
\alpha_2 = \frac{c(2)^2c(5)-2c(2)c(3)c(4)-c(3)^3}{c(2)[c(2)c(4)-c(3)^2-c(2)^3]},\quad \beta_2=\cdots \label{i=2}
\end{align}
\end{propo}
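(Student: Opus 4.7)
The plan is to exploit the single observation that the scalar product on monomials reduces to the averages of closed walks: $\langle x^i,x^j\rangle_{\G}=\frac{1}{n}\tr(\A^{i+j})=c(i+j)$. Since $\A$ has zero diagonal, $c(0)=1$ and $c(1)=0$, while $c(\ell)=\frac{1}{n}\sum_{i}m_i\lambda_i^{\ell}$ is manifestly a datum of the spectrum. Consequently, every inner product of polynomials appearing below is expressible in the $c(\ell)$'s, and the preintersection numbers, being quotients of such inner products, inherit this property.

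The first step is to build the predistance polynomials by Gram–Schmidt in a form adapted to the required normalization. I would introduce the monic orthogonal sequence $\widetilde p_0,\widetilde p_1,\widetilde p_2,\dots$ determined by $\widetilde p_0=1$ and by the orthogonality conditions $\langle\widetilde p_i,x^j\rangle=0$ for $j<i$; using $\langle x^i,x^j\rangle=c(i+j)$ and $c(1)=0$ this immediately yields $\widetilde p_1=x$ and $\widetilde p_2=x^2-(c(3)/c(2))x-c(2)$, and in general the coefficients of $\widetilde p_i$ solve a linear system whose matrix is the Hankel moment matrix $(c(r+s))_{r,s<i}$. The predistance polynomial is then $p_i=(\widetilde p_i(\lambda_0)/\|\widetilde p_i\|^2)\,\widetilde p_i$, so the normalization $\|p_i\|^2=p_i(\lambda_0)$ of Lemma~\ref{ortho-pol} is automatic.

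The second step is to read off $\alpha_i,\beta_i,\gamma_i$ from these polynomials using Lemma~\ref{ortho-pol}. From the recurrence \eqref{3-term-recur} one has the scale-invariant expression
\begin{equation*}
\alpha_i=\frac{\langle x\widetilde p_i,\widetilde p_i\rangle}{\|\widetilde p_i\|^2},
\end{equation*}
while Lemma~\ref{ortho-pol}$(a_2)$ gives $\gamma_i=\beta_{i-1}\,p_{i-1}(\lambda_0)/p_i(\lambda_0)$ and Lemma~\ref{ortho-pol}$(a_1)$ gives $\beta_i=\lambda_0-\alpha_i-\gamma_i$. Since numerators and denominators are already written in terms of $\langle x^r,x^s\rangle=c(r+s)$, each $\alpha_i,\beta_i,\gamma_i$ is a rational function of $\lambda_0$ and $c(2),c(3),\ldots,c(2i+1)$.

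The third step is to verify the explicit formulas. For $i=0$ we have $\alpha_0=c(1)/c(0)=0$ and then $\beta_0=\lambda_0$ by $(a_1)$. For $i=1$ one computes $\|\widetilde p_1\|^2=c(2)$ and $\langle x\widetilde p_1,\widetilde p_1\rangle=c(3)$, yielding $\alpha_1=c(3)/c(2)$; since $p_1(\lambda_0)=\lambda_0\widetilde p_1(\lambda_0)/\|\widetilde p_1\|^2=\lambda_0^2/c(2)$, the identity $(a_2)$ gives $\gamma_1=\beta_0/p_1(\lambda_0)=c(2)/\lambda_0$, and $\beta_1=\lambda_0-\alpha_1-\gamma_1$. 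For $i=2$ the same recipe applied to $\widetilde p_2$ produces the stated closed forms for $\gamma_2$ and $\alpha_2$, and $\beta_2$ then follows from $(a_1)$. All higher $\alpha_i,\beta_i,\gamma_i$ are obtained inductively by the identical procedure, with rational expressions whose complexity grows with $i$. The only real obstacle is combinatorial bookkeeping: keeping track of the Hankel determinants that appear in the denominators as $i$ grows, but no genuinely new idea is needed beyond what has been set up.
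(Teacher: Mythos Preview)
Your proposal is correct and follows essentially the same route as the paper: Gram--Schmidt orthogonalization with respect to $\langle\,\cdot\,,\cdot\,\rangle_\G$, the key observation that $\langle x^i,x^j\rangle_\G=c(i+j)$, and then extraction of $\alpha_i,\beta_i,\gamma_i$ from the resulting polynomials via the three-term recurrence and Lemma~\ref{ortho-pol}$(a1)$. The only cosmetic differences are that you pass through the monic orthogonal family $\widetilde p_i$ before renormalizing (the paper normalizes directly at each step) and that you obtain $\gamma_i$ from Lemma~\ref{ortho-pol}$(a2)$ rather than from the inner-product formula $\gamma_i=\langle xp_{i-1},p_i\rangle_\G/\|p_i\|_\G^2$; both choices are equivalent and lead to the same explicit expressions.
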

\begin{proof}
The proof is by induction. We know that, knowing the predistance polynomials $p_0,\ldots,p_{i-1}$, $i\ge 1$, the Gram-Schmidt method yields
\begin{equation}
\label{p(i)GS}
p_i=\frac{r_i(\lambda_0)}{||r_i||^2}r_i
\end{equation}
where
$$
r_i=x^i-\sum_{j=0}^{i-1} \frac{\langle {x}^i,p_j\rangle}{||p_j||^2}
=x^i-\sum_{j=0}^{i-1} \frac{\sum_{h=0}^{d}m(\lambda_h)\lambda_h p_j(\lambda_h)}{\sum_{h=0}^{d}m(\lambda_h) p_j^2(\lambda_h)}.
$$
Then, from $p_0=1$, we obtain that $p_1=\frac{\lambda_0}{c(2)}x$, whence, applying the formulas
\begin{align}
\gamma_i &=\frac{\langle {x} {p_{i-1}},p_i\rangle}{||p_i||^2}=\frac{1}{p_i(\lambda_0)}\sum_{j=0}^{d}m(\lambda_j)\lambda_j p_i(\lambda_j)p_{i-1}(\lambda_j), \label{gamma-i}\\
\alpha_i & =\frac{\langle {x} {p_i},p_i\rangle}{||p_i||^2}=\frac{1}{p_i(\lambda_0)}\sum_{j=0}^{d}m(\lambda_j)\lambda_j p_i^2(\lambda_j),\label{alpha-i}\\
 \beta_i& =\lambda_0-\alpha_i-\gamma_i. \label{beta-i},
\end{align}
with $i=0,1$ we get \eqref{i=0} and \eqref{i=1}.
In general, if all the coefficients of the predistance polynomials $p_0,\ldots,p_{i-1}$, $i\ge 1$, are given in terms of the numbers $c(\ell)$'s, we proceed in the same way by first calculating $p_i$ and then applying the formulas \eqref{gamma-i}--\eqref{alpha-i}. This assures that the obtained preintersections parameters $\alpha_i$, $\beta_i$, and $\gamma_i$ will be expressed also in terms of the $c(\ell)$'s. For instance, the computation for $i=2$ give the results in \eqref{i=2}.
\end{proof}

%
%


\section{An example}
In this section, we illustrate the previous results with one example.
Let $\G$ be the graph 4.47 of Table 4 in the textbook of Cvetkovi\'c, Doob, and  Sachs \cite{cds82}, shown in Fig. \ref{fig:4.47}, which has $n=9$ vertices, and spectrum
$$
\spec \G  = \left\{3^1,\left(\frac{-1+\sqrt{13}}{2}\right)^2,0^3,(-1)^1,\left(\frac{-1-\sqrt{13}}{2}\right)^2\right\}.
$$
Thus, $\G$ has $d+1=5$ distinct eigenvalues.

\begin{figure}[t]
    \begin{center}
  \includegraphics[width=10cm]{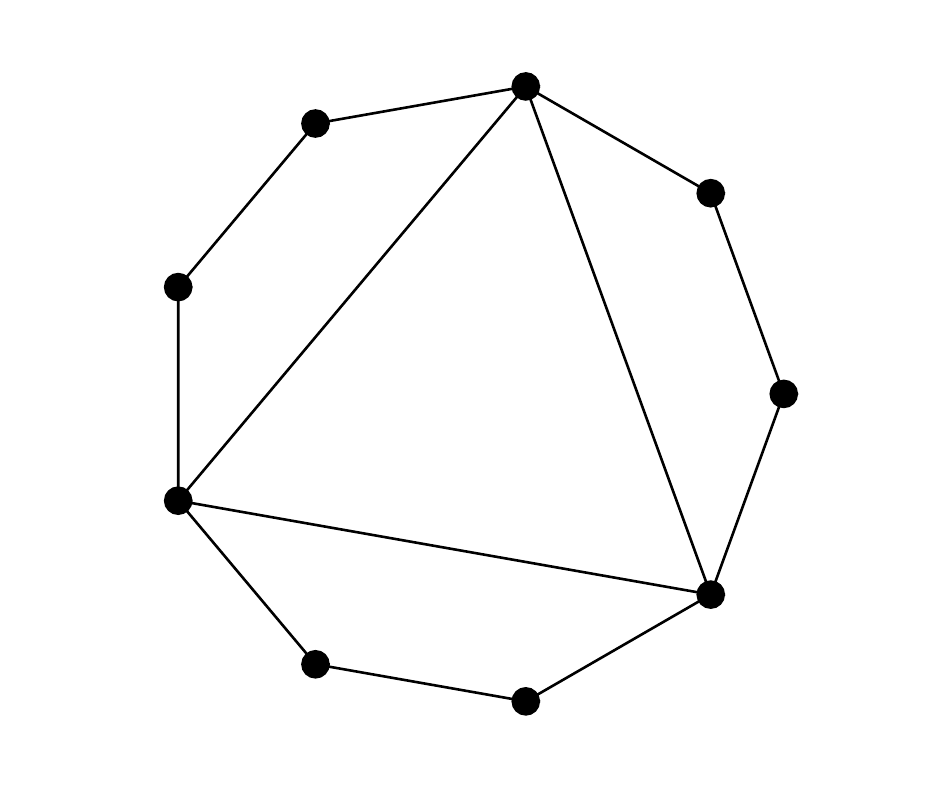}
  \end{center}
  \caption{The graph 4.47 in Table 4 of \cite{cds82}.}
  \label{fig:4.47}
\end{figure}

\subsection{From the spectrum to the predistance polynomials}
As mentioned in Subsection \ref{Sb3.1}, the sequence of predistance polynomials $p_0, p_1, p_2, p_3, p_4$  are orthogonal with respect to the scalar product
$$
\langle f, g\rangle_{\G} =\frac{1}{n}\tr (f(\A)g(\A))=\frac{1}{n} \sum_{i=0}^d m_i
f(\lambda_i) g(\lambda_i),
$$
and normalized in such a way that
$\|p_i\|_{\G}^2=p_i(\lambda_0)$. Then, we can obtain them by applying the Gram-Schmidt method, starting from the sequence $1,x,x^2,x^3,x^4$, and the result is:
$$
\begin{array}{l}
p_0(x)=1,\\[2mm]
\displaystyle p_1(x)=\frac{9}{8}x,\\[4mm]
\displaystyle p_2(x)=-\frac{268}{157}-\frac{201}{1256}x+\frac{201}{314}x^2,\\[4mm]
\displaystyle p_3(x)=\frac{23607}{50711}-\frac{83082}{50711}x-\frac{732}{2983}x^2+\frac{183}{646}x^3,\\[4mm]
\displaystyle p_4(x)=\frac{78}{323}+\frac{547}{1292}x-\frac{32}{57}x^2-\frac{113}{969}x^3+\frac{1}{12}x^4.
\end{array}
$$

\subsection{From the predistance polynomials to the spectrum}
To obtain the spectrum from the predistance polynomials, we can use the results in Proposition \ref{poly-sp}. So,
the Hoffman polynomial $H=p_0+p_1+p_2+p_3+p_4$ is
$$
H(x)=-\frac{1}{4}x-\frac{1}{6}x^2+\frac{1}{6}x^3+\frac{1}{12}x^4,
$$
with zeros being the distinct eigenvalues different from $\lambda_0$:
$$
\lambda_1=\frac{-1+\sqrt{13}}{2},\ \lambda_2=0,\ \lambda_3=-1,\ \lambda_4=\frac{-1-\sqrt{13}}{2}.
$$
Moreover, the largest root of the polynomial given by (\ref{det_l0}) is $\lambda_0=3$.
Alternatively, by \eqref{lambda0}, $\lambda_0=-\frac{\omega_1^1\omega_2^0}{\omega_2^2}=-(9/8)(-268/157)/(201/314)=3$.

Moreover, the values of the parameters $\phi_i$ and $p_d(\lambda_i)$ are:
$$
\phi_0=108,\
\phi_1=\frac{3}{2}[13-7\sqrt{13}],\
\phi_2=9,\
\phi_3=-12, \
\phi_4=\frac{3}{2}[13+7\sqrt{13}],
$$
and
$$
p_4(\lambda_0)=\frac{39}{646},\
p_4(\lambda_1)=-\frac{1}{646}[39+21\sqrt{13}],\
p_4(\lambda_2)=\frac{78}{323},
$$
$$
p_4(\lambda_3)=-\frac{351}{646},\
p_4(\lambda_4)=\frac{1}{646}[-39+21\sqrt{13}].
$$
Thus, by applying \eqref{multi}, we get the multiplicities:
$$
m_0=1,\quad
m_1=2,\quad
m_2=3,\quad
m_3=1,\quad
m_4=2.
$$

\subsection{From the predistance polynomials to the preintersection numbers}
To obtain the preintersection numbers by using the predistance polynomials, we apply  Proposition \ref{matrixapproach} giving the relationship between the preintersection matrix and the polynomial coefficient matrix of the graph.

The matrix $\OO$ containing the polynomial coefficients of the graph $\Gamma$ is:
$$
\OO = \left(\begin{array}
{ccccc}
1  & 0    &      0      &  0 &  0           \\
0   & {9}/{8}    & 0   & 0  &   0       \\
-{268}/{157}     & -{201}/{1256}     & {201}/{314}   & 0           &  0        \\
{23607}/{50711}  &   -{83082}/{50711}& -{732}/{2983} & {183}/{646} & 0        \\
78/323           & 547/1292          &-32/57         & -{113}/{969}    & 1/12
\end{array}\right).
$$
Then, with $\U$ given in \eqref{Omega-U}, Proposition \ref{matrixapproach} yields:
$$
\R'=\OO'\U{\OO}^{-1}=\left(\begin{array}
{ccccc}
0  & 8/9    &      0      &  0 &  0           \\
3   & 1/4    & 471/268   & 0  &   0       \\
0     & 67/36     &387/628   & 21641/9577  &  0        \\
0 &   0  & 6588/10519    & 27036/50711   & 1098/323       \\
\end{array}\right),
$$
and finally, we add the last row of the matrix $\R$ of preintersection numbers by using the equality $\alpha_i+\beta_i+\gamma_i=\beta_0=3$:
$$
\left(\begin{array}
{ccccc}
\alpha_0  & \gamma_1    &            &          &             \\
\beta_0   & \alpha_1    & \gamma_2   &          &             \\
          & \beta_1     & \alpha_2   & \gamma_3 &             \\
          &             & \beta_2    & \alpha_3 &             \\
          &             &            & \beta_3  &  \gamma_4   \\
          &             &            &          &  \alpha_4
\end{array}\right)=\left(\begin{array}
{ccccc}
0   & 8/9   &                   &                     &           \\
3   & 1/4   & 471/268   &                     &          \\
    & 67/36 & 387/628   & 21641/9577  &          \\
    &                   & 6588/10519  & 27036/50711 & 1098/323 \\
    &               &                   & 4082/19703   & -129/323
\end{array}\right).
$$

\subsection{From the preintersection numbers to the predistance polynomials}
 In order to obtain the predistance polynomials from the preintersection numbers of  $\Gamma$, we  apply the three-term recurrence \eqref{3-term-recur} which, initialized with $p_0=1$, yields:
$$
\begin{array}{l}
p_0(x)=1,\\[2mm]
\displaystyle p_1(x)=\frac{(x-\alpha_{0})p_{0}-\beta_{(-1)}p_{(-1)}}{\gamma_1}=\frac{9}{8}x,\\[4mm]
\displaystyle p_2(x)=\frac{(x-\frac{1}{4})p_1-3}{\frac{471}{268}}=-\frac{268}{157}-\frac{201}{1256}x+\frac{201}{314}x^2,\\[4mm]
\displaystyle p_3(x)=\frac{(x-\frac{387}{628})p_{2}-\frac{67}{36}p_{1}}{\frac{21641}{9577}}=\frac{23607}{50711}-\frac{83082}{50711}x-\frac{732}{2983}x^2+\frac{183}{646}x^3,\\[4mm]
\displaystyle p_4(x)=\frac{(x-\frac{27036}{50711})p_{3}-\frac{6588}{10519}p_{2}}{\frac{1098}{323}}=\frac{78}{323}+\frac{547}{1292}x-\frac{32}{57}x^2-\frac{113}{969}x^3+\frac{1}{12}x^4.
\end{array}
$$

Alternatively, we can compute the characteristic polynomial of each submatrix $\R_{i-1}$ for $i=1,\ldots,d$. Then, we get:
\begin{align*}
p_1(x)&=\frac{9}{8}\det\left(x\I-\left(\begin{array}{c}0\end{array}\right)\right)
=\frac{9}{8}x,\\
p_2(x)&=\frac{9}{8}\cdot \frac{268}{471}\det\left( x\I-\left(\begin{array}{cc}0 & 8/9 \\ 3 & 1/4 \\ \end{array}\right)\right)=-\frac{268}{157}-\frac{201}{1256}x+\frac{201}{314}x^2,\\
p_3(x)&=\frac{9}{8}\cdot \frac{268}{471}\cdot\frac{9577}{21641}\det \left( x\I-\left(\begin{array}{ccc}0 & 8/9 & \\ 3 & 1/4 & 471/268 \\  & 67/36 & 387/628 \\ \end{array}\right)\right) \\
&=\frac{23607}{50711}-\frac{83082}{50711}x-\frac{732}{2983}x^2+\frac{183}{646}x^3,\\
p_4(x)&=\frac{9}{8}\cdot \frac{268}{471}\cdot\frac{9577}{21641}\cdot\frac{323}{1098}\det \left( x\I-\left(\begin{array}{cccc} 0 & 8/9 &   &  \\ 3 & 1/4 & 471/268 &  \\  & 67/36 & 387/628 & 21641/9577 \\  &   & 6588/10519 & 27036/50711  \\ \end{array}\right)\right)\\
&=\frac{78}{323}+\frac{547}{1292}x-\frac{32}{57}x^2-\frac{113}{969}x^3+\frac{1}{12}x^4.
\end{align*}

We can also check that the principal coefficients of each predistance polynomials are easily determined by the parameters $\gamma_i$'s:
$$
\begin{array}{l}
\displaystyle \omega_1^1=\frac{1}{\gamma_1}=\frac{9}{8},\\[4mm]
\displaystyle \omega_2^2=\frac{1}{\gamma_1\gamma_2}=\frac{9}{8}\cdot\frac{268}{471}=\frac{201}{314},\\[4mm]
\displaystyle \omega_3^3=\frac{1}{\gamma_1\gamma_2\gamma_3}=\frac{9}{8}\cdot\frac{268}{471}\cdot\frac{9577}{21641}=\frac{183}{646},\\[4mm]
\displaystyle \omega_4^4=\frac{1}{\gamma_1\gamma_2\gamma_3\gamma_4}=\frac{9}{8}\cdot\frac{268}{471}\cdot\frac{9577}{21641}\cdot\frac{323}{1098}=\frac{1}{12}.\\[4mm]
\end{array}
$$

\subsection{From the preintersection numbers to the spectrum}
To obtain the spectrum of  $\G$,
we first compute the characteristic polynomial of the preintersection matrix
$$
\R =
\left(\begin{array}
{ccccc}
0  & 8/9   &           &               &          \\
3  & 1/4   & 471/268   &               &          \\
   & 67/36 & 387/628   & 21641/9577    &          \\
   &       & 6588/10519& 27036/50711   & 1098/323 \\
   &       &           & 4082/19703    &-129/323
\end{array}\right),
$$
which turns out to be $\phi_{\G}(x)=x^5-x^4-8 x^3+3x^2+9x$. Then, its roots are
$$
\lambda_0=3,\ \lambda_1=\frac{1}{2}(-1+\sqrt{13}),\ \lambda_2=0,\ \lambda_3=-1,\ \lambda_4=\frac{1}{2}(-1-\sqrt{13}).
$$
To compute the multiplicities, we first consider the left and right eigenvectors of $\lambda_0$:
$$
\vecu_0=\j=\left( 1,1,1,1,1\right)\qquad\mbox{and}\qquad
\vecv_0=\left( 1,\frac{27}{8},\frac{4489}{1256},\frac{100467}{101422},\frac{39}{646}\right),
$$
so giving $n=\langle \vecu_0,\vecv_0\rangle=9$. Now,
let us consider, for example, the eigenvalue $\lambda_2=0$. Then, the corresponding left and right normalized eigenvectors of $\R$ are
$$
\vecu_2=\left( 1,0,-\frac{32}{67},0,\frac{86}{183},4\right)
\qquad\mbox{and}\qquad
\vecv_2=\left( 1,0,-\frac{286}{157},\frac{23607}{50711},\frac{78}{323}\right).
$$
Then, we get
$$
m_2=\frac{n}{\langle \vecu_2,\vecv_2\rangle}=3,
$$
and similar computations give
$\langle\vecu_0,\vecv_0\rangle=9$, $\langle \vecu_1,\vecv_1\rangle=\frac{9}{2}$, $\langle\vecu_2,\vecv_2\rangle=3$, $\langle\vecu_3,\vecv_3\rangle=9$,
and  $\langle\vecu_4,\vecv_4\rangle=\frac{9}{2}$ so giving
the other multiplicities $m_1=2$, $m_3=1$, and $m_4=2$ .

\subsection{From the spectrum to the preintersection numbers}

In our case, the average numbers of walks of length $\ell=0,1,\ldots,5$ turn out to be
$$
c(0)=1,\qquad c(1)=0,\qquad c(2)=\frac{8}{3},\quad c(3)=\frac{2}{3},\qquad c(4)=16,\qquad c(5)=\frac{40}{3},
$$
and, then, Proposition
\ref{propo-sp-preintersec} gives:
$$
\alpha_0=0,\  \beta_0=3,\ \gamma_1=\frac{8}{9},\ \alpha_1=\frac{1}{4},\  \beta_1=\frac{67}{36},\  \gamma_2=\frac{471}{268},\  \alpha_2=\frac{387}{628},\ \beta_2=\frac{6588}{10519},\ldots
$$
and we can keep applying the method to obtain the remaining preintersection numbers.

\section{Some applications}

In this section we present some applications of the information given by the spectrum, the predistance polynomials, and the preintersection numbers of a given graph. Moreover we show how the equivalences of these informations allows us to rewrite some of the properties and/or conditions in different forms. We begin with some combinatorial properties of a graph that can be deduced from its preintersection numbers (see \cite{adf16}).

\subsection{Properties of the preintersection numbers}

We can see if the graph is bipartite or how large is its odd girth with simply checking at its matrix $\Omega$ of coefficients.
\begin{propo}
Let $\G$ with be a graph with $d+1$ distinct eigenvalues. Then,
\begin{itemize}
\item[$(a)$]
$\G$ is bipartite if and only if $\alpha_0=\cdots=\alpha_d=0$.
\item[$(b)$]
If $\G$ is not bipartite, then it has odd girth $2m+1$ if and only if $\alpha_0=\cdots=\alpha_{m-1}=0$ and $\alpha_m>0$.
\end{itemize}
\end{propo}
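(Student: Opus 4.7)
The plan is to work with the identity
\[
\alpha_i\,p_i(\lambda_0)=\langle xp_i,p_i\rangle_{\G}=\langle x,p_i^2\rangle_{\G}=\frac{1}{n}\tr\bigl(\A\,p_i(\A)^2\bigr),
\]
which comes from the three-term recurrence \eqref{3-term-recur} and the normalisation $\|p_i\|_{\G}^2=p_i(\lambda_0)$, and to relate it to the average closed-walk numbers $c(\ell)=\frac{1}{n}\tr(\A^{\ell})$ of Proposition~\ref{propo-sp-preintersec}. The key observation is that if $p_i$ has pure parity $i$, then $p_i^2=\sum_{k=0}^{i}b_{2k}^{(i)}x^{2k}$ is even with leading coefficient $b_{2i}^{(i)}=(\omega_i^i)^2>0$, and expanding monomial by monomial yields
\[
\alpha_i=\frac{1}{p_i(\lambda_0)}\sum_{k=0}^{i}b_{2k}^{(i)}\,c(2k+1).
\]

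The core step is the biconditional \emph{$\alpha_0=\cdots=\alpha_{i-1}=0$ if and only if $c(1)=c(3)=\cdots=c(2i-1)=0$}, to be proved by induction on $i$. Under $\alpha_0=\cdots=\alpha_{i-1}=0$, the recurrence \eqref{pi-by-recur} reduces to $p_j=\gamma_j^{-1}(xp_{j-1}-\beta_{j-2}p_{j-2})$ for $1\le j\le i$, so an inner induction on $j$ starting from $p_0=1$ and $p_1=(\lambda_0/\overline{k})x$ gives $p_j(-x)=(-1)^jp_j(x)$ for $0\le j\le i$. The displayed formula for $\alpha_i$ then applies, and using the inductive hypothesis $c(2k+1)=0$ for $k<i$ it simplifies to $\alpha_i=(\omega_i^i)^2\,c(2i+1)/p_i(\lambda_0)$, which has the sign of $c(2i+1)$.

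The combinatorial bridge is that a closed walk of odd length always decomposes, at a repeated vertex, into two shorter closed walks of opposite parities; hence the shortest odd closed walk in $\G$ is a simple odd cycle and has length equal to the odd girth. Consequently $\G$ has odd girth $2m+1$ if and only if $c(2k+1)=0$ for $0\le k<m$ and $c(2m+1)>0$, and $\G$ is bipartite if and only if $c(2k+1)=0$ for every $k\ge 0$. Combined with the biconditional, these equivalences give part~$(b)$ and the forward direction of part~$(a)$ at once. For the converse of $(a)$, I would work directly with the tridiagonal matrix $\R$ of \eqref{3-term-matrix}: a vanishing diagonal implies $\D\R\D^{-1}=-\R$ with $\D=\diag(1,-1,\ldots,(-1)^d)$, so by Proposition~\ref{bbb} the distinct eigenvalues of $\G$ (those of $\R$) form a symmetric set about $0$; moreover the conjugation by $\D$ sends each standard left/right $\lambda$-eigenvector of $\R$ to a standard $(-\lambda)$-eigenvector, preserving both the first component and the scalar product $\langle\u_i,\v_i\rangle$, so by \eqref{mult-Biggs} the multiplicities match, and $\G$ is bipartite.

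The main technical point is the parity bookkeeping in the induction: one must check that parity is genuinely preserved by the reduced recurrence, and that the coefficient of $c(2i+1)$ in the formula for $\alpha_i$ is the strictly positive constant $(\omega_i^i)^2$, ruling out any cancellation that could allow $\alpha_i=0$ while $c(2i+1)\neq 0$.
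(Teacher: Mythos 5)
The paper offers no proof of this proposition: it is quoted from \cite{adf16}, so your argument must stand on its own, and in substance it does --- it is essentially the standard proof from that reference. The chain you set up is sound: $\alpha_i p_i(\lambda_0)=\langle xp_i,p_i\rangle_{\G}=\frac1n\tr(\A p_i(\A)^2)$ follows from \eqref{3-term-recur} (it is formula \eqref{alpha-i}); vanishing of $\alpha_0,\ldots,\alpha_{i-1}$ does force $p_0,\ldots,p_i$ to have pure parity via \eqref{pi-by-recur}; the resulting expansion $\alpha_i=p_i(\lambda_0)^{-1}\sum_k b_{2k}^{(i)}c(2k+1)$ with top coefficient $(\omega_i^i)^2>0$ is correct; and since each $c(2k+1)=\frac1n\tr(\A^{2k+1})$ is a nonnegative walk count, the inductive biconditional and the sign statement $\alpha_m>0\Leftrightarrow c(2m+1)>0$ both go through, as does the reduction of shortest odd closed walks to odd cycles. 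Two points deserve to be made explicit. First, the forward direction of $(b)$ tacitly requires $m\le d$ so that $\alpha_m$ exists; this holds because two antipodal vertices on a shortest odd cycle of length $2m+1$ must be at distance $m$ in $\G$ (a shorter path would splice with one of the two arcs into a shorter odd closed walk), and the diameter of a graph with $d+1$ distinct eigenvalues is at most $d$. Second, your conjugation argument $\D\R\D^{-1}=-\R$ for the converse of $(a)$ is valid, but the last step should be spelled out: either invoke the classical fact that a connected graph with spectrum symmetric about $0$ is bipartite, or simply note that the matching multiplicities from \eqref{mult-Biggs} give $c(2k+1)=\frac1n\sum_i m_i\lambda_i^{2k+1}=0$ for every $k$, so $\G$ has no odd closed walks. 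In fact this detour is avoidable: your biconditional at $i=d+1$ already yields $c(1)=\cdots=c(2d+1)=0$, and the bound ``odd girth $\le 2d+1$'' from the first remark then excludes all odd cycles, giving a uniform treatment of $(a)$ and $(b)$.
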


From the results in Subsection $3.3$, we observe that if $\alpha_i=0$ for $0\leq i\leq m$ then the coefficients of the preintersection polynomials $\omega_i^j$ equals $0$ if $i+j$ is odd (that is, when $i$ and $j$ have distinct parity). Then, we can rewrite this previous proposition as follows:

\begin{propo}
\begin{itemize}
\item[$(a)$]
A graph $\G$ with $d+1$ distinct eigenvalues is bipartite if and only if, in the matrix $\Omega$, $\omega_i^j=0$  for every $i+j$ odd.
\item[$(b)$]
If $\G$ is not bipartite, then it has odd girth $2m+1$ if and only if $\omega_i^j=0$ for every $i+j$ odd and $i\leq m$.
\end{itemize}
\end{propo}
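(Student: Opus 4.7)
The plan is to deduce the statement from the preceding proposition by proving, for each $m$, the equivalence
\[
\alpha_0=\cdots=\alpha_m=0 \quad\Longleftrightarrow\quad \omega_i^j=0 \text{ whenever } i+j \text{ is odd and } i\le m+1,
\]
so that part $(a)$ becomes the case $m=d$ and part $(b)$ follows from the case $m-1$, together with a short check that $\alpha_m>0$ corresponds to the failure of the $\omega$-vanishing at level $m+1$.

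For the forward direction I would argue by induction on $i$ using the three-term recurrence written as
\[
p_{i+1}=\gamma_{i+1}^{-1}\bigl[(x-\alpha_i)p_i-\beta_{i-1}p_{i-1}\bigr].
\]
The base cases $p_0=1$ and $p_1=x/\gamma_1$ (which uses $\alpha_0=0$, automatic by Lemma~\ref{ortho-pol}$(a)$) already exhibit monomials only of degrees $\equiv 0$ and $\equiv 1\pmod 2$. Assuming inductively that $p_{i-1}$ and $p_i$ contain only monomials of degree $\equiv i-1$ and $\equiv i\pmod 2$ respectively, the products $xp_i$ and $\beta_{i-1}p_{i-1}$ contribute only terms of degree $\equiv i+1\pmod 2$, while $\alpha_i p_i$ is the sole potential source of wrong-parity terms; the hypothesis $\alpha_i=0$ removes it, so $p_{i+1}$ inherits the right parity, giving $\omega_{i+1}^j=0$ when $(i+1)+j$ is odd. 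For the converse I would apply Proposition~\ref{propo-pol-pre}$(a)$: since $i+(i-1)$ and $(i+1)+i$ are both odd, the vanishing hypothesis makes both summands in $\alpha_i=\omega_i^{i-1}/\omega_i^i-\omega_{i+1}^i/\omega_{i+1}^{i+1}$ vanish (and similarly $\alpha_0=-\omega_1^0/\omega_1^1=0$), while $\alpha_d$ is recovered from Lemma~\ref{ortho-pol}$(a1)$.

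The main subtlety is completing part $(b)$: one must identify the strict condition $\alpha_m>0$ in the $\omega$-language. Under the already-established vanishing $\omega_m^{m-1}=0$, the same formula reduces to $\alpha_m=-\omega_{m+1}^m/\omega_{m+1}^{m+1}$, so $\alpha_m\ne 0$ is equivalent to $\omega_{m+1}^m\ne 0$; combined with the general fact that $\alpha_m\ge 0$, this pins down the odd girth exactly as in the previous proposition, and closes the argument for both parts simultaneously.
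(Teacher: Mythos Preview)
Your approach---reducing to the preceding proposition via the parity of the $\omega_i^j$---is exactly the route the paper takes (its ``proof'' is the one-sentence observation immediately before the statement). The inductive argument for the forward implication, and the use of Proposition~\ref{propo-pol-pre}$(a)$ for the converse, are both correct for the indices $1\le i\le d-1$ to which that formula applies.

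The genuine gap is at the top index. You assert that ``$\alpha_d$ is recovered from Lemma~\ref{ortho-pol}$(a1)$,'' but that identity only gives $\alpha_d=\lambda_0-\gamma_d$ (with the usual convention $\beta_d=0$), which carries no parity information about the $\omega_i^j$ and does not force $\alpha_d=0$. Concretely, the equivalence you set out to prove fails at $m=d$: for the pentagon $C_5$ one has $d=2$, predistance polynomials $p_0=1$, $p_1=x$, $p_2=x^2-2$, so every $\omega_i^j$ with $i+j$ odd vanishes, yet $\alpha_2=\lambda_0-\gamma_2=2-1=1\ne 0$ and $C_5$ is not bipartite. Hence your argument for part~$(a)$ breaks down, and your treatment of part~$(b)$---which hinges on $\omega_{m+1}^m\ne 0$---also collapses when the odd girth equals $2d+1$, since there is no $p_{d+1}$. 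This is not a defect peculiar to your write-up: the paper's own justification covers only the forward implication, and the same boundary case is left unaddressed there as well; the example shows part~$(a)$, as literally stated, cannot be salvaged without an additional hypothesis.
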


Concerning the girth, we have the following results.

\begin{propo}
\begin{itemize}
\item[$(a)$]
A regular graph $\G$ has girth $2m+1$ if and only if $\alpha_0=\cdots=\alpha_{m-1}=0$, $\alpha_m\neq 0$ and $\gamma_1=\cdots=\gamma_m=1$.
\item[$(b)$]
A regular graph $\G$ has girth $2m$ if and only if $\alpha_0=\cdots=\alpha_{m-1}=0$, $\gamma_1=\cdots=\gamma_{m-1}=1$ and $\gamma_m>1$.
\end{itemize}
\end{propo}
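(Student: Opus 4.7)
The plan is to translate the conditions on the $\alpha_i$ and $\gamma_i$ into conditions on the average closed-walk counts $c(\ell)=\frac{1}{n}\tr(\A^{\ell})$, and then exploit the elementary observation that, for a $k$-regular graph $\G$ of girth $g$, every closed walk of length $\ell<g$ is \emph{tree-like}: it cannot traverse any cycle, so it is in bijection with a closed walk at the root of the $k$-regular infinite tree $T$. Hence
\[
c_{\G}(\ell)=c_T(\ell)\quad\text{for all }\ell<g,
\]
while cycles of length $g$ contribute a strictly positive excess to $c_{\G}(g)-c_T(g)$ (each $\ell$-cycle gives $2\ell$ extra closed walks of length $\ell$; note also $c_T(2m+1)=0$, since $T$ is bipartite). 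In $T$ itself, each vertex at distance $i\ge 1$ from the root has exactly one neighbor at distance $i-1$, none at distance $i$, and $k-1$ at distance $i+1$, so the tree preintersection parameters are $\alpha_i^T=0$, $\gamma_i^T=1$, $\beta_0^T=k$, $\beta_i^T=k-1$ ($i\ge 1$).

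Next, I would invoke Proposition~\ref{propo-sp-preintersec}: its recursive formulas express $\gamma_i$ in terms of $c(0),\ldots,c(2i)$ and $\alpha_i$ in terms of $c(0),\ldots,c(2i+1)$, and by induction the converse also holds (the low moments are uniquely recovered from the first few parameters, since for regular graphs $\beta_i=k-\alpha_i-\gamma_i$ is automatic). One further step of Gram--Schmidt then yields, whenever $c_{\G}(\ell)=c_T(\ell)$ for all earlier $\ell$, the clean one-moment identities
\[
\gamma_m-1=\frac{c_{\G}(2m)-c_T(2m)}{p_m(\lambda_0)},\qquad \alpha_m=\frac{c_{\G}(2m+1)-c_T(2m+1)}{p_m(\lambda_0)},
\]
since, under the matching of the earlier moments, $p_m$ coincides with the (monic) tree polynomial, so the only new contribution to $\|p_m\|^2$ and to $\langle xp_m,p_m\rangle$ comes from the single new moment.

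With these ingredients, part~$(a)$ runs as follows. If the girth is $2m+1$, then $c_{\G}(\ell)=c_T(\ell)$ for all $\ell\le 2m$, forcing $\alpha_0=\cdots=\alpha_{m-1}=0$ and $\gamma_1=\cdots=\gamma_m=1$; the presence of a $(2m+1)$-cycle yields $c_{\G}(2m+1)>0=c_T(2m+1)$, and the displayed identity gives $\alpha_m>0$, in particular $\alpha_m\ne 0$. Conversely, the stated hypotheses recover $c_{\G}(\ell)=c_T(\ell)$ for $\ell\le 2m$, so $\G$ has no cycle of length $\le 2m$ (hence $g\ge 2m+1$), while $\alpha_m\ne 0$ forces $c_{\G}(2m+1)\ne 0$, so an odd cycle of length at most $2m+1$ exists (hence $g\le 2m+1$). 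Part~$(b)$ is entirely parallel, now with $\gamma_m$ playing the role of $\alpha_m$: girth $2m$ matches tree moments only up to $c(2m-1)$, and a $2m$-cycle contributes $c_{\G}(2m)>c_T(2m)$, which via the identity above forces $\gamma_m>1$.

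The step I expect to be the main obstacle is the clean derivation of the two single-moment identities for $\gamma_m-1$ and $\alpha_m$: this is where the Gram--Schmidt bookkeeping must be executed precisely, confirming that under the matching of lower moments the increment in the next parameter depends linearly only on the single new moment discrepancy, with the stated denominator $p_m(\lambda_0)>0$. The combinatorial input---short closed walks are tree-like, while each cycle contributes a strictly positive excess to the moment of its length---is routine.
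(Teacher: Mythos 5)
The paper gives no proof of this proposition (it is quoted from \cite{adf16}), but your argument is correct and is essentially the standard one used there: closed walks of length less than the girth $g$ are tree-like, so the moments $c(\ell)=\frac{1}{n}\tr(\A^{\ell})$ agree with those of the infinite $k$-regular tree for $\ell<g$ with a strictly positive excess at $\ell=g$, and these moments are in triangular correspondence with the parameters via $c(\ell)=(\R^{\ell})_{00}$, a closed walk of length $\ell$ at node $0$ of the weighted path involving only $\gamma_i,\beta_{i-1}$ for $i\le\lfloor \ell/2\rfloor$ and $\alpha_i$ for $i\le\lfloor(\ell-1)/2\rfloor$ (with $\beta_i=k-\alpha_i-\gamma_i$ supplying the missing parameters). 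The only slip is cosmetic: your denominator $p_m(\lambda_0)$ is exactly right for the $\alpha_m$ identity (the hypotheses force $p_m$ monic, so $\|p_m\|^2=p_m(\lambda_0)$ equals the tree norm), but in the $\gamma_m$ identity the correct denominator is the tree value $k(k-1)^{m-1}=r_m(\lambda_0)$ rather than $p_m(\lambda_0)=r_m(\lambda_0)^2/\|r_m\|^2$, which differs from it precisely when the moment discrepancy is nonzero; since both quantities are positive, the sign conclusions, and hence the proof, are unaffected.
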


Using the same equivalences as before, we have:

\begin{propo}
\begin{itemize}
\item[$(a)$]
A regular graph $\G$ has girth $2m+1$ if and only if
the polynomial coefficients satisfies $\omega_i^j=0$ for every $i+j$ odd and $i\leq m-1$, $\omega_m^j\neq 0$ for some $m+j$ odd, and $\omega_1^1=\cdots=\omega_m^m=1$.
\item[$(b)$]
A regular graph $\G$ has girth $2m$ if and only if
the polynomial coefficients satisfies $\omega_i^j=0$ for every $i+j$ odd and $i\leq m-1$, $\omega_1^1=\cdots=\omega_{m-1}^{m-1}=1$, and $\omega_m^m<1$.
\end{itemize}
\end{propo}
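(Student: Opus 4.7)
The approach is to translate the preceding characterization of girth (in terms of the preintersection numbers $\alpha_i$ and $\gamma_i$) into an equivalent one on the polynomial coefficients $\omega_i^j$, using the formulas of the unlabeled lemma at the end of Subsection~\ref{Sb3.3} together with a parity argument on the three-term recurrence.

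The $\gamma$-conditions are immediate: since $\omega_i^i=1/(\gamma_1\cdots\gamma_i)$, the chain $\gamma_1=\cdots=\gamma_k=1$ is equivalent to $\omega_1^1=\cdots=\omega_k^k=1$. Taking $k=m$ yields the last condition of part~(a); for part~(b), combining the chain up to $k=m-1$ with $\omega_m^m=1/\gamma_m$ shows that $\gamma_m>1$ is equivalent to $\omega_m^m<1$.

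For the $\alpha$-conditions I would proceed by induction on the three-term recurrence $\gamma_i p_i=(x-\alpha_{i-1})p_{i-1}-\beta_{i-2}p_{i-2}$: starting from the pure-parity polynomials $p_0=1$ and $p_1=x$, the vanishing of $\alpha_{i-1}$ forces $p_i$ to retain the parity pattern of its index, so iteratively $\alpha_0=\cdots=\alpha_{k-1}=0$ is equivalent to $\omega_i^j=0$ whenever $i+j$ is odd and $i\le k$. The converse direction can also be read off directly by inverting part~(ii) of that lemma, namely $\omega_i^{i-1}=-(\alpha_0+\cdots+\alpha_{i-1})/(\gamma_1\cdots\gamma_i)$, inductively. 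This translates the condition ``$\omega_i^j=0$ for $i+j$ odd and $i\le m-1$'' in both parts, while the extra condition ``$\omega_m^j\ne0$ for some $m+j$ odd'' in part~(a) records the first index at which the parity pattern is broken, matching the requirement $\alpha_m\ne0$ in the previous proposition.

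Substituting the three translated equivalences into the previous proposition then closes both parts. The delicate point, and main place where care is needed, will be keeping the indexing consistent: since the coefficient $\alpha_{i-1}$ enters the recurrence for $p_i$ (and not for $p_{i-1}$), the first index at which the parity pattern is broken in the $\omega$'s has to be tracked carefully against the correct subscript in the $\alpha$-sequence, and one should also verify the small-$m$ boundary cases by hand to ensure the formulation accommodates them.
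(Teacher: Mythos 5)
Your strategy coincides with the paper's (the paper gives no written proof here, only the remark that the result follows ``using the same equivalences as before''): translate the girth characterization in terms of $\alpha_i$ and $\gamma_i$ into one about the $\omega_i^j$ via $\omega_i^i=1/(\gamma_1\cdots\gamma_i)$ and the parity behaviour of the recurrence \eqref{pi-by-recur}. The $\gamma$-part of your translation is correct. The gap is exactly at the point you defer to the end: the indices do not line up, and no amount of careful bookkeeping will make them. Your own (correct) parity equivalence states that $\alpha_0=\cdots=\alpha_{k-1}=0$ holds iff $\omega_i^j=0$ for all $i+j$ odd with $i\le k$, because $\alpha_{i-1}$ is the coefficient entering the recurrence for $p_i$. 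Matching this against the hypotheses of the preceding proposition ($\alpha_0=\cdots=\alpha_{m-1}=0$ and, in part $(a)$, $\alpha_m\neq 0$) produces the conditions ``$\omega_i^j=0$ for $i+j$ odd and $i\le m$'' and ``$\omega_{m+1}^j\neq 0$ for some $m+1+j$ odd'' --- one index higher than in the statement you are asked to prove. So the substitution you propose establishes a shifted version of the proposition, not the printed one; moreover, when $m=d$ the polynomial $p_{m+1}$ does not exist, so $\alpha_m\neq 0$ admits no $\omega$-translation at all. A concrete check that the printed statement cannot be recovered: $C_5$ has girth $5$ (so $m=d=2$) and $p_2=x^2-2$, hence $\omega_2^1=0$ and the stated requirement ``$\omega_m^j\neq 0$ for some $m+j$ odd'' fails; the printed statement also contradicts the odd-girth proposition two results earlier, which asserts $\omega_i^j=0$ for $i+j$ odd and $i\le m$. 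To close your proof you must either target the corrected indices or explicitly flag the discrepancy in the statement.
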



\subsection{Characterizations of distance-regularity}
Now we give some characterizations of distance-regularity in graphs, which are given in terms of the different informations considered.
We begin with the so-called `spectral excess theorem' (see Fiol and Garriga \cite{fg97}), which can be seen as a quasi-spectral characterization of a distance-regular graph.

\begin{theo}{\bf (The spectral excess theorem)}
Let $\G=(V,E)$ be a regular graph with
spectrum/predistance polynomials/preintersection numbers as above. Then $\G$ is distance-regular if an only if its spectral excess
$$
p_d(\lambda_0)=\frac{\beta_0\beta_1\cdots\beta_{d-1}}{\gamma_1\gamma_2\cdots\gamma_d}
=n\left(\sum_{i=0}^d \frac{\pi_0^2}{m_i\pi_i^2}\right)^{-1},
$$
(where $\displaystyle \pi_i=\prod_{j\neq i}|\lambda_i-\lambda_j|$, for $i=0,\ldots,d$) equals the average excess
$$
\overline{k}_d=\frac{1}{n}\sum_{u\in V} \G_d(u).
$$
\end{theo}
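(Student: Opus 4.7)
The plan is to first confirm that the two displayed expressions for $p_d(\lambda_0)$ agree, and then prove the main equivalence via a Cauchy--Schwarz argument. The key player is the distance-$d$ matrix $\A_d$, the $01$-matrix with $(\A_d)_{uv}=1$ iff $\dist(u,v)=d$. The two formulas for $p_d(\lambda_0)$ coincide: the first, $\beta_0\cdots\beta_{d-1}/(\gamma_1\cdots\gamma_d)$, follows by iterating Lemma~\ref{ortho-pol}$(a2)$ from $p_0(\lambda_0)=1$, and the second is Lemma~\ref{ortho-pol}$(b)$.

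The forward implication is immediate: if $\G$ is distance-regular then $\A_d=p_d(\A)$, so by regularity $\A_d\j = p_d(\A)\j = p_d(\lambda_0)\j$, meaning every vertex has $\G_d(u)=p_d(\lambda_0)$ vertices at extremal distance, and averaging gives $\overline{k}_d=p_d(\lambda_0)$.

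For the converse, the crucial observation is that, by regularity, $\sum_{i=0}^d p_i(\A)=H(\A)=\J$ (Lemma~\ref{ortho-pol}$(c)$), while for any pair $u,v$ with $\dist(u,v)=d$ the entries $(\A^j)_{uv}$ vanish for $j<d$; hence $(p_i(\A))_{uv}=0$ for $i<d$ and $(p_d(\A))_{uv}=1$, matching $\A_d$ on pairs at distance $d$. The trace computations then yield
\[
\langle p_d(\A),\A_d\rangle_\G = \overline{k}_d = \|\A_d\|_\G^2, \qquad \|p_d(\A)\|_\G^2 = p_d(\lambda_0),
\]
and Cauchy--Schwarz gives $\overline{k}_d\le \sqrt{p_d(\lambda_0)\,\overline{k}_d}$, i.e.\ $\overline{k}_d\le p_d(\lambda_0)$, with equality forcing $\A_d=c\,p_d(\A)$ for some scalar $c$; matching entries at a distance-$d$ pair gives $c=1$, hence $\A_d=p_d(\A)$.

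The main obstacle is the remaining structural step: deducing from $\A_d=p_d(\A)$ alone that $\A_i=p_i(\A)$ for every $i<d$, which is exactly distance-regularity. The plan here is a descending induction using the three-term recurrence \eqref{3-term-recur}: since $\gamma_{d+1}=0$, evaluating $xp_d=\beta_{d-1}p_{d-1}+\alpha_dp_d$ at $\A$ and substituting $\A_d=p_d(\A)$ yields $\beta_{d-1}p_{d-1}(\A)=\A\A_d-\alpha_d\A_d$. The right-hand side can be analyzed combinatorially by counting, for each pair $(u,v)$, the number of neighbors of $u$ at distance $d$ from $v$; this first forces the combinatorial regularity at the extremal distance (e.g., that this count equals $\alpha_d$ for all pairs at distance $d$) and eventually identifies $p_{d-1}(\A)$ with the $01$-matrix $\A_{d-1}$. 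Iterating this procedure downward, peeling off one predistance matrix at a time, completes the argument. Carrying out this inductive step rigorously---verifying that each predistance matrix $p_i(\A)$ is in fact the $01$-distance matrix $\A_i$ rather than some other linear combination compatible with the polynomial identities---is the substantive content of the original Fiol--Garriga proof \cite{fg97}.
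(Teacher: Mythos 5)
Your argument is correct in outline, but it follows a genuinely different route from the paper, which gives no self-contained proof at all: the paper simply quotes the result from Fiol and Garriga \cite{fg97} and only justifies the identity $p_d(\lambda_0)=\beta_0\beta_1\cdots\beta_{d-1}/(\gamma_1\cdots\gamma_d)$ by iterating Lemma \ref{ortho-pol}$(a2)$ --- exactly the observation you make (your base case $p_0(\lambda_0)=1$ is in fact the correct one). What you supply instead is the ``global'' proof of the spectral excess theorem in the style of van Dam \cite{vd08} and Fiol--Gago--Garriga \cite{fgg10}: using regularity via $H(\A)=\J$ you get $(p_d(\A))_{uv}=1$ on pairs at distance $d$, hence $\langle p_d(\A),\A_d\rangle_{\G}=\|\A_d\|_{\G}^2=\overline{k}_d$ and $\|p_d(\A)\|_{\G}^2=p_d(\lambda_0)$, and Cauchy--Schwarz yields the bound $\overline{k}_d\le p_d(\lambda_0)$ with equality forcing $\A_d=p_d(\A)$. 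This buys an actual inequality and a transparent equality analysis, where the paper's citation buys only brevity. The one place where your write-up remains a sketch is the final implication $\A_d=p_d(\A)\Rightarrow\G$ distance-regular: the descending induction through the three-term recurrence is the right idea, but the delicate point at each stage is to verify that $p_i(\A)$ is the $01$-matrix $\A_i$ (using again $\sum_{i=0}^d p_i(\A)=\J$ and the vanishing $(p_i(\A))_{uv}=0$ for $i<\dist(u,v)$) rather than merely a matrix satisfying the recurrence; you explicitly flag this and defer it to \cite{fg97} and \cite{f02}, which is a fair stopping point given that the paper defers the entire theorem to the same source.
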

\begin{proof}
The result was proved in \cite{fg97} with the spectral excess $p_d(\lambda_0)$ given in terms of the spectrum. The condition involving the preintersection numbers comes from applying Lemma \ref{ortho-pol}$(a2)$ starting from $p_1(\lambda_0)=1$.
\end{proof}

The following result was proved by Abiad,Van Dam, Fiol \cite{adf16} for a more particular family of distance-regular graphs. (Here it can be shown that the conditions on the preintersection numbers $\gamma_i$'s is related to the existence of unique geodetic paths between vertices.)

\begin{theo}
\label{theo-adf16}
Let $\G$ be a graph with $d+1$ distinct eigenvalues and preintersection numbers $\gamma_i$, $i=1,\ldots,d$.
\begin{itemize}
\item[$(a)$]
If $\gamma_1=\cdots =\gamma_{d-1}=1$, then $\G$ is distance-regular.
\item[$(b)$]
If $\G$ is bipartite and $\gamma_1=\cdots =\gamma_{d-2}=1$, then $\G$ is distance-regular.
\end{itemize}
\end{theo}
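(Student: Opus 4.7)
The plan is to reduce both parts to the spectral excess theorem by establishing $p_d(\lambda_0)=\overline{k}_d$.

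Under either hypothesis we have $\gamma_1=1$, and combining this with $\omega_1^1=\lambda_0/\overline{k}$ from Lemma~\ref{ortho-pol}$(a)$ and the general identity $\omega_i^i=1/(\gamma_1\cdots\gamma_i)$ gives $\overline{k}=\lambda_0$, so $\Gamma$ is $\lambda_0$-regular and the spectral excess theorem is applicable. The main tool will be the norm estimate
\[
p_i(\lambda_0)\ \ge\ (\omega_i^i)^2\,\overline{k}_i,\qquad 0\le i\le d,
\]
obtained by combining $n\,p_i(\lambda_0)=\tr(p_i(\A)^2)=\sum_{u,v}(p_i(\A))_{uv}^2$ with the observation that for $d(u,v)=i$ one has $(p_i(\A))_{uv}=\omega_i^i\,(\A^i)_{uv}$ (since $(\A^j)_{uv}=0$ for every $j<d(u,v)$) and $(\A^i)_{uv}$ is a positive integer, so $(\A^i)_{uv}^2\ge(\A^i)_{uv}\ge 1$. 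Note that equality in this estimate encodes simultaneously $p_i(\A)=\A_i$ and the existence of a unique geodetic walk between every pair of vertices at distance $i$, matching the heuristic mentioned in the statement.

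For part~$(a)$, the hypothesis gives $\omega_i^i=1$ for $0\le i\le d-1$, so $p_i(\lambda_0)\ge\overline{k}_i$ for those indices. Summing these $d$ inequalities and comparing with the Hoffman identity $\sum_{i=0}^d p_i(\lambda_0)=H(\lambda_0)=n$ from Lemma~\ref{ortho-pol}$(c)$ and the trivial $\sum_{i=0}^d\overline{k}_i=n$ yields $p_d(\lambda_0)\le\overline{k}_d$. Since the classical spectral excess inequality provides the reverse $p_d(\lambda_0)\ge\overline{k}_d$, the two must coincide and $\Gamma$ is distance-regular.

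Part~$(b)$ follows the same scheme, but is missing the inequality $p_{d-1}(\lambda_0)\ge\overline{k}_{d-1}$, because now $\omega_{d-1}^{d-1}=1/\gamma_{d-1}$ is not necessarily $1$. To recover it, I would exploit bipartiteness: the assumption $\alpha_i\equiv 0$ makes the three-term recurrence imply $p_i(-x)=(-1)^i p_i(x)$, so $(p_i(\A))_{uv}=0$ whenever $d(u,v)\not\equiv i\pmod 2$. This parity restriction culls half of the pairs from the norm sum for $\|p_{d-1}\|^2$, which, combined with $\gamma_j=1$ for $j\le d-2$ and the relations $\beta_{i-1}p_{i-1}(\lambda_0)=\gamma_i p_i(\lambda_0)$ from Lemma~\ref{ortho-pol}$(a2)$, should close the gap. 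This step is the main obstacle: turning the parity structure into a quantitative bound strong enough to compensate for the missing hypothesis $\gamma_{d-1}=1$ requires a more delicate analysis, presumably linking $p_{d-1}(\A)$ and $p_d(\A)$ via the $\pm$-symmetry of the bipartite spectrum so that only a single combined inequality over indices $d-1$ and $d$ is needed, replacing the two separate ones used in part~$(a)$.
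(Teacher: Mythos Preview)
The paper does not supply its own proof of this theorem; it is stated with attribution to Abiad, van Dam, and Fiol \cite{adf16}, accompanied only by the parenthetical remark that the conditions $\gamma_i=1$ are related to unique geodesics. So there is no in-paper argument to compare against, and your proposal must be judged on its own.

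Your part~$(a)$ is correct. The chain is sound: $\gamma_1=1$ forces $\omega_1^1=1$, hence $\overline{k}=\lambda_0$, hence regularity (for a connected graph $\lambda_0=\overline{k}$ characterizes regularity); the norm bound $n\,p_i(\lambda_0)=\sum_{u,v}(p_i(\A))_{uv}^2\ge (\omega_i^i)^2\sum_u k_i(u)$ is exactly right because only pairs at distance $i$ are retained and each contributes at least $(\omega_i^i)^2$; and the conclusion via $\sum_i p_i(\lambda_0)=n=\sum_i\overline{k}_i$ together with the spectral excess inequality $\overline{k}_d\le p_d(\lambda_0)$ is clean. This is in fact the approach taken in \cite{adf16}, so you have reconstructed the intended argument.

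Your part~$(b)$, however, is not a proof but a plan with an explicit hole, and you say so yourself. The parity observation $p_i(-x)=(-1)^ip_i(x)$ is correct and does kill the entries of $p_{d-1}(\A)$ on pairs of the wrong parity, but that alone does not produce the missing inequality $p_{d-1}(\lambda_0)\ge\overline{k}_{d-1}$: the dropped entries were already nonnegative contributions, so discarding them cannot strengthen a lower bound. What is actually needed is a joint control of the two top indices, and the phrase ``presumably linking $p_{d-1}(\A)$ and $p_d(\A)$'' is where the real work lies; in \cite{adf16} this is handled by an argument specific to the bipartite structure that you have not reproduced. As written, part~$(b)$ is a genuine gap.
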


This result implies that a graph is distance-regular  if its predistance polynomials are monic.

\begin{theo}
Let $\G$ be a graph with $d+1$ distinct eigenvalues and predistance polynomials $p_i$, $i=0,1,\ldots,d$.
\begin{itemize}
\item[$(a)$]
If all the $p_i$'s, are monic ($\omega_i^i=1$) for $i=1,\ldots,d-1$, then $\G$ is distance-regular.
\item[$(b)$]
If $\G$ is bipartite and all the $p_i$'s, are monic, then $\G$ is distance-regular.
\end{itemize}
\end{theo}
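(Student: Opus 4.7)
The plan is to reduce both claims directly to Theorem~\ref{theo-adf16} by translating the monicity hypothesis on the predistance polynomials into the hypothesis on the $\gamma_i$'s that appears there. The bridge is the explicit formula
\[
\omega_i^i=\frac{1}{\gamma_1\gamma_2\cdots\gamma_i},\qquad i=1,\ldots,d,
\]
established in the lemma at the end of Subsection~\ref{Sb3.3}, which expresses the leading coefficient of $p_i$ in terms of the preintersection numbers.

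The central step is the observation that monicity of $p_i$, i.e.\ $\omega_i^i=1$, is equivalent to $\gamma_1\gamma_2\cdots\gamma_i=1$. Since the $\gamma_j$'s are strictly positive (recall that $\|p_j\|_{\G}^2=p_j(\lambda_0)>0$ by Lemma~\ref{ortho-pol} and that $\gamma_j=\xi_{1,j-1}^j$), I can pass to ratios: from $\omega_{i-1}^{i-1}=\omega_i^i=1$ one immediately deduces $\gamma_i=1$. Starting the induction at $i=1$ with $\omega_1^1=1/\gamma_1=1$, we conclude that monicity of $p_1,\ldots,p_k$ forces $\gamma_1=\cdots=\gamma_k=1$.

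For part~$(a)$, taking $k=d-1$ gives $\gamma_1=\cdots=\gamma_{d-1}=1$, and Theorem~\ref{theo-adf16}$(a)$ yields distance-regularity. For part~$(b)$, the hypothesis that every $p_i$ is monic gives in particular $\gamma_1=\cdots=\gamma_{d-2}=1$; combined with the bipartite assumption, Theorem~\ref{theo-adf16}$(b)$ applies.

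There is no real obstacle here: once the leading-coefficient formula is in hand, the theorem is an immediate corollary of Theorem~\ref{theo-adf16}. The only point worth verifying carefully is the positivity of each $\gamma_j$, which legitimises the telescoping cancellation $\gamma_i=(\gamma_1\cdots\gamma_i)/(\gamma_1\cdots\gamma_{i-1})$ used in the inductive step.
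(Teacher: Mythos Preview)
Your proposal is correct and follows essentially the same route as the paper: both reduce the statement to Theorem~\ref{theo-adf16} by converting the monicity hypothesis into $\gamma_1=\cdots=\gamma_{k}=1$. The only cosmetic difference is that the paper invokes Proposition~\ref{propo-pol-pre}$(c)$, namely $\gamma_i=\omega_{i-1}^{i-1}/\omega_i^i$, directly (so that $\gamma_i=1/1=1$ in one step starting from $\omega_0^0=1$), whereas you use the equivalent product formula $\omega_i^i=1/(\gamma_1\cdots\gamma_i)$ and then telescope; your side remark about positivity of the $\gamma_j$'s is harmless but not actually needed, since the telescoping quotient is $1/1$ regardless of sign.
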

\begin{proof}
Apply Theorem \ref{theo-adf16} and Proposition \ref{propo-pol-pre}$(c)$ recursively from $\omega_0^0=1$.
\end{proof}

\vskip 1cm
\noindent{\large \bf Acknowledgments.}
This research is supported by the
{\em Ministerio de Ciencia e Innovaci\'on}, Spain, and the {\em European Regional
Development Fund} under project MTM2014-60127-P), and the {\em Catalan Research
Council} under project 2014SGR1147.

\newpage

\end{document}